\def\R{\mathbb{R}}
\def\Z{\mathbb{Z}}
\def\T{\mathbb{T}}
\def\1{\mathds{1}}
\def\eps{\varepsilon}
\def\zftc{Z_{\ft{\c}}}
\def\zf{Z_f}
\def\zg{Z_g}
\def\c{\mathbf{c}}
\def\k{\mathbf{k}}
\renewcommand\leq{\leqslant}
\renewcommand\geq{\geqslant}
\renewcommand\Re{\operatorname{Re}}
\newcommand{\ft}[1]{\widehat #1}
\newcommand\dotprod[2]{\langle #1 , #2 \rangle}
\newcommand\supp{\operatorname{supp}}
\newcommand\clos{\operatorname{Clos}}
\newcommand\sign{\operatorname{sign}}
\newcommand\lip{\operatorname{Lip}}
\newcommand\E{\operatorname{\mathbb{E}}}
\theoremstyle{plain}
\newtheorem*{theorem*}{Theorem}
\newtheorem{theorem}{Theorem}
\newtheorem{lemma}{Lemma}
\newtheorem{corollary}{Corollary}
\newtheorem*{theorem-a}{Theorem A}
\theoremstyle{definition}
\newtheorem*{definition*}{Definition}
\newtheorem*{remark*}{Remark}
\newenvironment{enumerate-math}
{\begin{enumerate}
\addtolength{\itemsep}{5pt}
\renewcommand\theenumi{(\roman{enumi})}}
{\end{enumerate}}
\begin{document}

\title[Wiener's `closure of translates' problem]
{Wiener's `closure of translates' problem\\
and Piatetski-Shapiro's\\
uniqueness phenomenon}

\author{Nir Lev}
\address{Department of Mathematics, Weizmann Institute of Science, Rehovot 76100, Israel.}
\email{nir.lev@weizmann.ac.il}

\author{Alexander Olevskii}
\address{School of Mathematical sciences, Tel-Aviv University, Tel-Aviv 69978, Israel.}
\email{olevskii@post.tau.ac.il}

\begin{abstract}
N.\ Wiener characterized the cyclic vectors (with respect to trans\-lations)
in $\ell^p(\Z)$ and $L^p(\R)$, $p=1,2$, in terms of the zero set of the
Fourier transform. He conjectured that a similar characterization should
be true for $1 < p < 2$. Our main result contradicts this conjecture.
\end{abstract}

\thanks{Research supported in part by the Israel Science Foundation.}

\maketitle


\section{Introduction}

\subsection{}
Let $G$ be a locally-compact abelian group, and $1 \leq p < \infty$.
A function $f \in L^p(G)$ is called a \emph{cyclic vector} (with respect
to translations) if the linear span of its translates is dense in the
space. It is well known that $f \in L^p(\T)$ (where $\T$ is the circle group)
is a cyclic vector if and only if all the Fourier coefficients of $f$ are
non-zero. The same is true for general compact groups (see \cite{segal:group}).

In the non-compact case the situation is more complicated.
N.\ Wiener \cite{wiener:tauberian} characterized the cyclic vectors in
$L^p(\R)$ (or $\ell^p(\Z)$) only for $p=1$ and $2$. We formulate
the result for $\ell^p(\Z)$, the $L^p(\R)$ case is similar.

\begin{theorem-a}[Wiener]
Let $\c = \{c_n\}$, $n \in \Z$.
\begin{enumerate-math}
\item
$\c$ is a cyclic vector in $\ell^2(\Z)$ if and only if the Fourier transform
\begin{equation}
\label{eq:ft-c}
\ft{\c}(t) := \sum_{n \in \Z} c_n e^{int}
\end{equation}
is non-zero almost everywhere.
\item
$\c$ is cyclic in $\ell^1(\Z)$ if and only if $\ft{\c}(t)$ has no zeros.
\end{enumerate-math}
\end{theorem-a}

Part (i) is a consequence of the unitarity of the Fourier transform.
Part (ii) is more delicate, the proof is based on the fact that the
space $\ell^1(\Z)$ is a convolution algebra.

In both cases the result can be stated as follows: $\c$ is a cyclic
vector if and only if the set
\[
\zftc := \{t \in \T \, : \, \ft{\c}(t) = 0\}
\]
of the zeros of the Fourier transform \eqref{eq:ft-c} is ``small'' in a
certain sense. Wiener conjectured (see \cite{wiener:tauberian}, p. 93)
that a similar result should be true for $\ell^p$ spaces, at least for
$1 < p < 2$. This problem has been studied by Segal \cite{segal:span},
Beurling \cite{beurling:closure}, Pollard \cite{pollard}, Herz \cite{herz},
Newman \cite{newman} and other authors (see \cite{edwards, kinukawa,
rosenblatt-shuman}).

First of all, one should define precisely how to understand the zero set.
The answer is obvious if the vector $\c$ is assumed to be in $\ell^1(\Z)$
(or $L^1(\R)$). The above mentioned authors have studied the problem
under this assumption. We shall keep it as well.

A more serious question is -- what kind of ``smallness'' one should
consider? One can show (see \cite{segal:span}) that if $1<p<2$ then the
condition in part (i) of Theorem A is not sufficient, and the condition
in part (ii) is not necessary for cyclicity in $\ell^p$. So one should
look for an ``intermediate measurement'' of smallness.

A.\ Beurling proved in \cite{beurling:closure} that if the Hausdorff
dimension of $\zftc$ is less than $2(p-1)/p$ then $\c$ is a cyclic vector
in $\ell^p$ $(1<p<2)$. This condition is sharp, but it is not necessary for the
cyclicity (see \cite{newman}).

On the other hand, it is well known that not only metrical but also arithmetical
``thinness'' properties may play an important role in problems of harmonic
analysis. In the above cited papers, various metrical and non-metrical properties
of the zero set of cyclic vectors in $L^p(\R)$ ($\ell^p(\Z)$) have been studied,
and a number of interesting results were obtained. In particular, for $p>2$ the
cyclic vectors were indeed characterized by a condition in terms of the zero set.
This condition (see Section \ref{subsection:background} below) is not easy to
check, but anyway it supports Wiener's conjecture that cyclicity depends on the
set $\zftc$ only.

Our main result shows that this is not the case for $1<p<2$.

\begin{theorem}
\label{thm:main-cyclic}
Let $1 < p < 2$. Then there is a compact set $K$ on the circle $\T$
with the following properties:
\begin{enumerate-math}
\renewcommand\theenumi{(\alph{enumi})}
\item
\label{cond:a}
If a vector $\c$ has fast decreasing coordinates, say
$\sum_{n \in \Z} |c_n| \, |n|^\eps < \infty$ for some
$\eps > 0$, and $\ft{\c}$ vanishes on $K$, then $\c$
is not cyclic in $\ell^p(\Z)$.
\item
\label{cond:b}
There exists $\c \in \ell^1(\Z)$, such that $\ft{\c}$
vanishes on $K$, and $\c$ is a cyclic vector in $\ell^p(\Z)$.
\end{enumerate-math}
\end{theorem}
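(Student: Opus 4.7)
\smallskip
\noindent\emph{Proof plan.}

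The starting point is the duality description of cyclicity: by Hahn-Banach, $\c \in \ell^p(\Z)$ fails to be cyclic if and only if there exists a nonzero $\phi \in \ell^{p'}(\Z)$, with $p'=p/(p-1)$, annihilating every translate of $\c$. On the Fourier side this is the existence of a nonzero pseudomeasure $\mu$ on $\T$ with coefficient sequence $\ft{\mu} \in \ell^{p'}$ such that $\mu \cdot \ft{\c} = 0$ as distributions on $\T$. Both parts (a) and (b) of the theorem thus become statements about the existence of such annihilating pseudomeasures, and the content of the theorem is that the condition $\ft{\c}|_K = 0$ is compatible with the existence of $\mu$ for one regularity class of $\c$ and with its non-existence for another.

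The set $K$ will be a perfect symmetric set of Cantor type, $K = \bigcap_n K_n$, with dissection ratios carefully tuned to $p$, in the spirit of Piatetski-Shapiro. The first goal is to construct on $K$ a distinguished nonzero pseudomeasure $\mu$ with $\ft{\mu} \in \ell^{p'}$, obtained as a weak-$*$ limit of finite pseudomeasures $\mu_n$ attached to $K_n$ with uniformly bounded $\ell^{p'}$ coefficient norms, enjoying a \emph{H\"older synthesis} property: $\mu \cdot f = 0$ for every $f$ that vanishes on $K$ and is H\"older continuous of some positive exponent. Part (a) is then immediate: the hypothesis $\sum |c_n||n|^\eps < \infty$ forces $\ft{\c}$ to be $\eps$-H\"older, and $\ft{\c}|_K = 0$ together with the H\"older synthesis gives $\mu \cdot \ft{\c} = 0$; by the duality step, $\c$ is non-cyclic.

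For part (b), one must exhibit $\c \in \ell^1(\Z)$ with $\ft{\c}|_K = 0$ for which no nonzero pseudomeasure $\nu$ with $\ft{\nu} \in \ell^{p'}$ satisfies $\nu \cdot \ft{\c} = 0$. The strategy is to take $\ft{\c}$ continuous, vanishing on $K$, assembled from small bumps placed on the component gaps of $\T \setminus K$, with amplitudes and frequencies calibrated so that $\c$ itself lies in $\ell^1$ while the modulus of continuity of $\ft{\c}$ at $K$ is strictly worse than any H\"older rate. The roughness of $\ft{\c}$ prevents the H\"older synthesis argument from applying, and one then has to rule out annihilation by any other pseudomeasure with $\ell^{p'}$ Fourier coefficients supported on $\zftc \supset K$.

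The main obstacle is this last step: the pseudomeasure $\mu$ produced for (a) is supported on $K$, hence on $\zftc$, and is a natural candidate to annihilate $\ft{\c}$, so one must exploit the \emph{failure} of spectral synthesis on $K$ for merely continuous test functions --- precisely the Piatetski-Shapiro type phenomenon alluded to in the title. Quantitatively, the heart of the matter is to match two scales on the Cantor-type set $K$: the H\"older exponent at which the synthesis above holds, versus the $\ell^1$-decay required of $\c$ and the modulus of continuity it imposes on $\ft{\c}$. The parameters of the construction must be arranged so that there is a genuine gap between these two regimes, and this is where the specific choice of dissection ratios for $K$ plays its role.
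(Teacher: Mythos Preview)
Your treatment of part~(a) is sound and matches the paper: the Hahn--Banach reduction to an annihilating element of $A_q(\T)$ is exactly the duality recorded in Section~\ref{subsection:background}, and the ``H\"older synthesis'' step you invoke is Herz's observation (property~\ref{remark:smooth-distribution-noncyclic} there) that any nonzero $S \in A_q(\T)$ supported on $K$ witnesses non-cyclicity of every $f \in \lip(\eps)$ vanishing on $K$.

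The gap is in part~(b). You correctly flag the obstacle --- the distribution built for (a) sits on $K \subset Z_{\ft{\c}}$ and is itself a candidate annihilator --- but ``exploit failure of synthesis and match two scales'' is not a mechanism. Cyclicity of $\c$ requires that \emph{every} nonzero $S \in A_q(\T)$ satisfy $\ft{\c}\cdot S \neq 0$, not merely the one you constructed; making $\ft{\c}$ rough blocks your particular $\mu$ but gives no handle on the rest. There is also a structural hazard you do not address: a symmetric Cantor set with dissection ratios ``tuned to $p$'' may well carry a nonzero \emph{measure} $\mu$ with $\ft{\mu}\in\ell^q$, and then property~\ref{remark:measure-noncyclic} of Section~\ref{subsection:background} forces every $g\in A(\T)$ vanishing on $K$ to be non-cyclic, killing (b) outright. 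The paper avoids both problems by a different route: $K$ is built (via Riesz-type products and a Bernstein-type concentration estimate, Lemma~\ref{lemma:principal}, not a dissection scheme) to be a \emph{Helson set} that still supports a nonzero distribution in $A_q$ (Theorem~\ref{thm:main-helson}). The Helson property is the missing structural ingredient: it implies (Lemma~\ref{lemma:helson-extension}) that every continuous function on $K$ extends to $A(\T)$ with arbitrarily small $A_p$ norm, and a Baire-category argument (Lemma~\ref{lemma:helson-cyclic}) then shows that a generic $g$ in $I(K)$ is cyclic in $A_p(\T)$ for every $p>1$. No explicit bump function or modulus-of-continuity bookkeeping is needed.
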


It follows that no characterization of the cyclic vectors exists
in terms of the zeros of the Fourier transform:

\begin{corollary}
\label{cor:impossible-integers}
Given any $p$, $1 < p < 2$, one can find two vectors in $\ell^1(\Z)$
such that one is cyclic in $\ell^p(\Z)$ and the other is not,
but their Fourier transforms have an identical set of zeros.
\end{corollary}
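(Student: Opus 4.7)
The plan is to combine the two parts of Theorem~\ref{thm:main-cyclic} by a convolution trick. Part~\ref{cond:b} furnishes a cyclic vector $\c \in \ell^1(\Z)$ with $\ft{\c}$ vanishing on $K$; denote its full Fourier zero set by $Z_0$, so $K \subseteq Z_0$. To prove the corollary it suffices to exhibit a \emph{non}-cyclic vector $\mathbf{d} \in \ell^1(\Z)$ whose Fourier zero set also equals $Z_0$, since then $\c$ and $\mathbf{d}$ are the required pair.

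The construction of $\mathbf{d}$ proceeds in two steps. First I produce an auxiliary vector $\mathbf{e} \in \ell^1(\Z)$ with very fast decreasing coordinates such that $\ft{\mathbf{e}}$ vanishes exactly on $K$: let $\ft{\mathbf{e}}$ be any non-negative function in $C^\infty(\T)$ whose zero set equals $K$ --- such a function exists for any closed $K \subseteq \T$ by a standard Whitney-type construction --- and take $\mathbf{e}_n$ to be its $n$th Fourier coefficient. Smoothness of $\ft{\mathbf{e}}$ forces $\sum |\mathbf{e}_n|\,|n|^\eps < \infty$ for every $\eps > 0$, so Theorem~\ref{thm:main-cyclic}\ref{cond:a} applies and tells us that $\mathbf{e}$ is not cyclic in $\ell^p(\Z)$. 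Second, I set $\mathbf{d} := \c * \mathbf{e}$; then $\mathbf{d} \in \ell^1(\Z)$, and since $Z_{\ft{\mathbf{e}}} = K \subseteq Z_0$ the product $\ft{\mathbf{d}} = \ft{\c}\,\ft{\mathbf{e}}$ has zero set $Z_0 \cup K = Z_0$, matching that of $\c$. Finally, any functional $\phi \in \ell^{p'}(\Z)$ annihilating all translates of $\mathbf{e}$ also annihilates all translates of $\c * \mathbf{e}$ by a short interchange-of-summation argument, so $\mathbf{d}$ is non-cyclic.

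The main subtle point is the exactness of the zero set of $\ft{\mathbf{e}}$: because I have no explicit control over $Z_0$ (only the inclusion $Z_0 \supseteq K$), I cannot tolerate stray zeros of $\ft{\mathbf{e}}$ outside $K$, as they could push $Z_{\ft{\mathbf{d}}}$ strictly above $Z_0$ and break the matching. Arranging for the smooth function $\ft{\mathbf{e}}$ to vanish on $K$ and nowhere else is the one nontrivial ingredient of the argument, and it is handled by the Whitney-type existence result; the remainder is a routine manipulation of convolutions and the Fourier transform.
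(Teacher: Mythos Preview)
Your argument is correct, but it takes a somewhat longer route than the paper does. In the paper's proof one simply picks a smooth (say $C^2$) function $f$ on $\T$ with $Z_f = Z_{\ft{\c}} \; (=Z_0)$ directly --- the same Whitney-type existence fact you invoke works just as well with the closed set $Z_0$ in place of $K$ --- and since $K\subseteq Z_0=Z_f$ and the Fourier coefficients of $f$ decay fast, part~\ref{cond:a} applies to $f$ itself, giving the required non-cyclic vector in one stroke. Your detour through the auxiliary $\mathbf{e}$ with $Z_{\ft{\mathbf{e}}}=K$ and the convolution $\mathbf{d}=\c*\mathbf{e}$ is a valid alternative: the interchange-of-summation argument shows that the closed translation-invariant subspace generated by $\mathbf{d}$ sits inside the one generated by $\mathbf{e}$, hence is proper. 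The payoff of your route is that it makes explicit why non-cyclicity is inherited under $\ell^1$-convolution, a fact the paper does not need; the cost is the extra step and the care about ``stray zeros'', which evaporates once one realizes the smooth function can be built on $Z_0$ rather than on $K$.
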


A similar result is true for $L^p(\R)$ (see Section \ref{section:non-periodic} below).

\subsection{}
Our approach to the problem is based on its relation to the uniqueness
problem in Fourier analysis, or more specifically, to an aspect of it
which we call the ``Piatetski-Shapiro phenomenon''.

Recall that a set $K \subset \T$ is called a
\emph{set of uniqueness} (U-set) if whenever a trigonometric series
\[
\sum_{n \in \Z} c_n e^{int}
\]
converges to zero at every point $t \notin K$, then all the coefficients
$c_n$ must be zero. Otherwise, $K$ is called a \emph{set of multiplicity} (M-set).
Classical Riemannian theory allows one to characterize the compact M-sets
as the compacts which support a non-zero distribution $S$ with Fourier
transform $\ft{S}(n)$ tending to zero as $|n| \to \infty$ (see \cite{kahane-salem}).

It was D.\ E.\ Menshov who discovered (1916) that a set of Lebesgue
measure zero can be an M-set. In fact, Menshov constructed a compact set
$K$ of Lebesgue measure zero, which supports a \emph{measure} whose Fourier
transform vanishes at infinity (see \cite{bary}). It was believed within
a long time that every compact M-set must support such a measure. This was
disproved in 1954 by I.\ Piatetski-Shapiro \cite{piatetski}, who constructed
a compact M-set which does not support such a measure.

This striking result was further developed by T.\ K\"{o}rner
\cite{korner} and R.\ Kaufman \cite{kaufman}, who presented different
examples of compact M-sets $K$ which are Helson sets. The latter
means that every measure $\mu$ supported by $K$ satisfies the condition
\[
\limsup_{|n| \to \infty} |\ft{\mu}(n)| \geq \delta(K) \int |d\mu|,
\quad \delta(K) > 0.
\]

As Kaufman mentioned, his construction was inspired by
Piatetski-Shapiro's original ideas. An additional improvement of this
technique was done by J.-P.\ Kahane \cite[pp. 213--216]{kahane-salem}
in his presentation of Kaufman's paper. This presen\-tation
was the starting point for our approach.

\subsection{}
Let $X$ be a Banach space of sequences (with a norm weaker than $\ell^2$).
We say that \emph{Piatetski-Shapiro's phenomenon exists for the space $X$}
if there is a compact set $K \subset \T$, which supports a (non-zero)
distribution $S$ with Fourier transform $\ft{S} \in X$, but which does
not support such a measure. The result from \cite{piatetski} means that
the phenomenon exists for the space $c_o$. On the other hand, potential
theory (see \cite{beurling:spectres, kahane-salem}) provides an important
example of spaces for which the phenomenon does not exist: the weighted
spaces $\ell^2(\Z, w)$ with e.g.\ the power weight
$w(n) = (1+|n|)^{-\alpha}$, $0 < \alpha \leq 1$.

Our concern, inspired by Wiener's problem, was:
\begin{quote}
\emph{Does Piatetski-Shapiro's phenomenon exist for $\ell^q$ spaces, $q > 2$}\,?
\end{quote}
The answer is \emph{yes}:

\begin{theorem}[\cite{lev-olevskii:piatetski}]
\label{thm:piatetski-lq}
Given any $q > 2$ there is a compact set $K \subset \T$ such that
\begin{enumerate-math}
\renewcommand\theenumi{(\alph{enumi}\:\!$'$)}
\item
\label{cond:atag}
$K$ supports a non-zero distribution $S$ such that
$\ft{S} \in \ell^q$;
\item
\label{cond:btag}
$K$ does not support any non-zero measure $\mu$ such
that $\ft{\mu} \in \ell^q$.
\end{enumerate-math}
\end{theorem}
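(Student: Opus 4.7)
The plan is to construct $K$ as a multi-scale Cantor-type set tuned to $q$, equipped with a distinguished distribution built from a Rudin--Shapiro-style signed combinatorics, and to exploit an $H$-set style arithmetic obstruction on $K$ to rule out measures. The guiding principle is to push all cancellation into the distribution, leaving the arithmetic rigidity of $K$ to kill the measures. Concretely, I would fix a rapidly growing sequence $N_1 \ll N_2 \ll \cdots$ and a dissection parameter $m = m(q)$, and build $K = \bigcap_k K_k$ by replacing at stage $k$ every surviving interval by $m$ equally spaced sub-intervals whose offsets inside the parent are comparable to $1/N_k$; the offsets are chosen so that multiplication by $N_k$ sends every point of $K_k$ into a prescribed proper sub-arc of $\T$.

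For part (a$'$), I would define
\[
S_k = c_k \sum_j \eps_j^{(k)} \, \delta_{x_j^{(k)}},
\]
where the $x_j^{(k)}$ are the centers of the stage-$k$ intervals and the signs $\eps_j^{(k)} \in \{\pm 1\}$ are produced by a Rudin--Shapiro recursion across scales. Square-root cancellation should give $|\ft{S_k}(n)| \lesssim m^{k/2}$ uniformly in $n$; normalizing $c_k$ so that $c_k m^{k/2}$ decays fast enough, the $S_k$ converge weakly to a distribution $S$ with $\supp S \subset K$. The multi-scale structure localizes the bulk of $|\ft{S}|$ near the scales $N_k$, so that taking $N_{k+1}/N_k$ to grow fast enough should deliver $\ft{S} \in \ell^q$.

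For part (b$'$), the arithmetic design of the offsets should yield an infinite sequence of integers $(n_k)$ such that $n_k t \pmod{2\pi}$ remains inside a prescribed proper sub-arc of $\T$ uniformly over $t \in K$. Consequently, for any non-zero positive measure $\mu$ on $K$ one obtains $|\ft{\mu}(n_k)| \geq c\|\mu\|$ for every $k$, and such a uniform lower bound along an infinite sequence forbids $\ft{\mu} \in \ell^q$. The complex-measure case reduces to this via decomposition into real and imaginary, positive and negative parts.

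The main obstacle is reconciling (a$'$) and (b$'$). Upgrading from Piatetski-Shapiro's original $c_o$ result to $\ell^q$ forces the Rudin--Shapiro cancellation to be quantified simultaneously across all scales, while the $H$-set type rigidity needed for (b$'$) caps how fast the scales $N_k$ may grow and how sparse the configuration can be made. Tuning $m$, the sequence $(N_k)$, and the sign patterns so that the resulting Fourier-decay exponent crosses the $\ell^q$ threshold without destroying the arithmetic obstruction is the delicate heart of the argument; everything else is bookkeeping.
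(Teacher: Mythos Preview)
Your plan contains a fatal internal contradiction. The mechanism you propose for (b$'$) --- an infinite sequence $(n_k)$ along which $n_k t \pmod{2\pi}$ stays in a fixed proper sub-arc for every $t\in K$ --- is exactly the definition of an $H$-set. But every $H$-set is a set of uniqueness: by the classical Riemann--Rajchman argument, such $K$ supports no non-zero distribution $S$ with $\ft{S}(n)\to 0$, hence certainly none with $\ft{S}\in\ell^q$. In other words, the very structure you impose to obtain (b$'$) makes (a$'$) impossible. You correctly sense the tension between the two conditions at the end of your proposal, but the specific rigidity you chose is too strong; no amount of tuning $m$, $(N_k)$, or the sign patterns can rescue (a$'$) once $K$ is an $H$-set.

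There is a second, independent gap in your treatment of (b$'$): the reduction of the complex-measure case to the positive case by Jordan decomposition does not work. If $\mu=\mu_+-\mu_-$, a lower bound on $|\ft{\mu_\pm}(n_k)|$ gives no lower bound on $|\ft{\mu}(n_k)|$, since the two contributions may cancel. This is precisely why the $H$-set property, which controls only positive measures, is insufficient for (b$'$) even if one forgets about (a$'$).

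The paper resolves both issues by proving something stronger: it builds $K$ to be a \emph{Helson} set carrying a distribution in $A_q(\T)$. The Helson property gives $\limsup_{|n|\to\infty}|\ft{\mu}(n)|\geq\delta\|\mu\|$ for \emph{every} (complex) measure $\mu$ supported on $K$, which yields (b$'$) immediately and for all signed measures at once. The distribution in (a$'$) is not obtained from signed point masses but from Riesz-type products $\prod_j(1+s\,w(t)\varphi(\nu^j t))$ with $\varphi$ a Rudin--Shapiro polynomial; a Bernstein-type concentration estimate for the associated ``almost multiplicative'' system, combined with a quantitative Kahane lemma on moments, produces smooth functions $f_j$ with $\|1-f_j\|_{A_q}$ small, supported on shrinking compacts $K_j$, whose infinite product converges in $A_q$ to the required $S$. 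The Helson property of $K=\bigcap K_j$ is arranged simultaneously, via auxiliary polynomials $P_j$ with controlled $A(\T)$ norm that witness the Helson condition.
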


The role of Theorem \ref{thm:piatetski-lq} in the cyclicity
problem is clarified by the following observation (see Section
\ref{subsection:background} below):

\emph{The condition \ref{cond:atag} is equivalent to part
\ref{cond:a} of Theorem \ref{thm:main-cyclic}, while the
condition \ref{cond:btag} is necessary for part \ref{cond:b},
with $q = p/(p-1)$.}

So Theorem \ref{thm:piatetski-lq} provides a chance for
(although it does not imply) the existence of a counter-example
to Wiener's conjecture.
Such a counter-example -- in a weaker form than Corollary
\ref{cor:impossible-integers} above -- was sketched in
\cite{lev-olevskii:generators}. The present paper contains
full proofs and extensions of the results obtained in
\cite{lev-olevskii:piatetski,lev-olevskii:generators}.
It is organized as follows. 

In Section \ref{section:auxiliary} we give
some preliminary background and auxiliary lemmas.

Section \ref{section:riesz-products} is the key one in the paper.
Our main tools are special measures on the circle, defined by
Riesz-type products, and a version of Bernstein stochastic
exponential estimate.

In Section \ref{section:helson} we construct a Helson set with the
property \ref{cond:atag} above. In Section \ref{section:proof-main}
we prove that every Helson set admits a vector with the property
\ref{cond:b}. So Theorem \ref{thm:main-cyclic} follows.

The non-periodic version is considered in Section \ref{section:non-periodic}.
The last Section \ref{section:remarks} contains some additional remarks.
In particular we discuss there the relation of Theorem \ref{thm:main-cyclic}
to P.\ Malliavin's celebrated ``non-synthesis'' phenomenon, and mention some
open problems.


\section{Preliminaries. Lemmas.}
\label{section:auxiliary}

\subsection{Notation}
In what follows $\T$ is the circle group $\R / 2 \pi \Z$.

As usual, $C(\T)$ is the space of continuous complex functions on $\T$,
with the norm $\|f\|_\infty := \sup |f(t)|$, $t \in \T$.
By a ``measure'' on $\T$ we always mean an element of the dual space
$M(\T)$, that is, a finite complex Borel measure.

We denote by $\{\ft{S}(n)\}$, $n \in \Z$, the Fourier coefficients
of a Schwartz distribution $S$ on $\T$. It will also be convenient
to keep the notation $\ft{\c}$ for the Fourier transform of a vector
$\c \in \ell^1(\Z)$ as defined in \eqref{eq:ft-c}.

Let $A_p(\T)$, $1 \leq p \leq \infty$, denote the Banach space of
distributions $S$ on $\T$ with Fourier coefficients belonging to
$\ell^p(\Z)$, endowed with the norm
\[
\|S\|_{A_p} := \|\ft{S}\|_{\ell^p}.
\]
For $p = 1$ this is the Wiener algebra $A(\T)$ of absolutely convergent
Fourier series (see \cite{kahane:absolument}).
Throughout we will use the following standard properties,
\[
\|f\|_\infty \leq \|f\|_A, \quad f \in A(\T),\\[2pt]
\]
\[
\|f \cdot g\|_{A_p} \leq \|f\|_{A} \, \|g\|_{A_p}, 
\quad f \in A(\T), \; g \in A_p(\T).
\]

\subsection{Cyclic vectors}
\label{subsection:background}

In this section we refer to some basic results about cyclicity.
The results go back to Segal \cite{segal:group}, Beurling
\cite{beurling:closure}, Pollard \cite{pollard}, Herz \cite{herz}
and Newman \cite{newman}. Actually the first four authors considered
Wiener's problem in $L^p(\R)$ rather than $\ell^p(\Z)$, but,
as the last author mentioned, ``the distinction is not vital''. 
See also Kahane-Salem \cite{kahane-salem}, pp. 111--112 and 122--123.

First it would be convenient to reformulate the concept of cyclicity
in an equivalent way, using the following

\begin{definition*}
An element $f \in A_p(\T)$, $1 \leq p < \infty$, is called a cyclic vector
(with respect to multiplication by exponentials) if the set $\{P(t) f(t)\}$,
where $P$ goes through all trigonometric polynomials, is dense in $A_p(\T)$.
\end{definition*}

Clearly, a vector $\c$ is cyclic (with respect to translations) in $\ell^p(\Z)$
if and only if its Fourier transform $f := \ft{\c}$ is cyclic in $A_p(\T)$ in
the sense just defined.

In what follows $f$ is assumed to belong to the Wiener algebra $A(\T)$, $\zf$ denotes
the set of the zeros of $f$, and $q = p/(p-1)$.
\begin{enumerate-math}
\item
\label{remark:polynom-cyclic}
\emph{$f$ is cyclic in $A_p(\T)$ if and only if there is a
sequence of trigonometric polynomials $P_n$ such that}
\[
\lim_{n \to \infty}
\|1 - P_n \cdot f\|_{A_p} = 0.
\]
\item
\label{remark:finite-cyclic}
\emph{If $\zf$ is finite then $f$ is cyclic in $A_p(\T)$ for every $p > 1$.}
\item
\label{remark:noncyclic-distribution}
\emph{If $f$ is a non-cyclic vector in $A_p(\T)$ then there is a
non-zero distribution $S \in A_q(\T)$, supported by $\zf$.}
\item
\label{remark:measure-noncyclic}
\emph{If there is a non-zero measure $\mu \in A_q(\T)$,
supported by $\zf$, then $f$ is a non-cyclic vector in $A_p(\T)$.}
\item
\label{remark:smooth-distribution-noncyclic}
\emph{If $f$ is continuously differentiable, and there is a non-zero
distribution $S \in A_q(\T)$ supported by $\zf$, then f is a non-cyclic
vector in $A_p(\T)$.}
\end{enumerate-math}

Actually (see \cite{herz}) the smoothness condition in
\ref{remark:smooth-distribution-noncyclic} can be reduced up to
$f \in \lip \eps$ for some $\eps > 0$, or, in terms of the Fourier
coefficients of $f$, up to
\[
\sum_{n \in \Z} |\ft{f}(n)| \, |n|^\eps < \infty
\quad \text{for some $\eps > 0$.}
\]

Observe that if $p > 2$ then $A_q(\T)$ is a functional space
(embedded in $L^2(\T)$). So the conditions
\ref{remark:noncyclic-distribution}--\ref{remark:measure-noncyclic}
imply:

\emph{A function $f \in A(\T)$ is a cyclic vector in $A_p(\T)$,
$p > 2$, if and only if its zero set $\zf$ does not support any
non-zero function $g \in A_q(\T)$.}

This condition is not very effective, but it shows that the cyclicity
of a vector $c \in \ell^1(\Z)$ in the space $\ell^p(\Z)$, $p>2$, admits 
characterization in terms of the zero set of the Fourier transform
(as Wiener thought). Here, by the way, the condition that $\zf$ has
Lebesgue measure zero obviously implies the cyclicity, but not vice
versa \cite{newman} (see also \cite{katznelson}, pp. 101--102).

The case $p = \infty$ was also considered. D.\ Newman \cite{newman}
proved that $\c \in \ell^1(\Z)$ is a cyclic vector in the space $c_0(\Z)$
if and only if $\zftc$ is a nowhere dense set in $\T$.

Now let $1 < p < 2$ (the case where Wiener's conjecture was most
certain). Then $A_q(\T)$ is not a functional space, and
\ref{remark:noncyclic-distribution}--\ref{remark:smooth-distribution-noncyclic}
only imply the following:

\emph{Let $f \in A(\T)$ be smooth $(f \in \lip \eps, \, \eps > 0)$.
Then it is cyclic in $A_p(\T)$, $1<p<2$, if and only if $\zf$ does
not support any non-zero distribution $S \in A_q(\T)$.}

However we will see that without the smoothness condition, the zero set
$\zf$ does not provide a characterization of the cyclic vectors.


\subsection{Auxiliary polynomials}
\label{subsection:auxiliary}
We shall use trigonometric polynomials with the following properties.

\begin{lemma}
\label{lemma:auxiliary}
Given any $q > 2$ and $\gamma > 0$ there is a real trigonometric
poly\-nomial $\varphi = \varphi_{q, \gamma}$ such that
\begin{equation}
\label{eq:auxiliary}
\ft{\varphi}(0) = 0, \quad
\|\varphi\|_{\infty} \leq 1, \quad
\|\varphi\|_{L^2} = \tfrac1{2}, \quad
\|\varphi\|_{A_q} < \gamma.
\end{equation}
\end{lemma}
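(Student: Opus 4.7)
The plan is to use a Rudin--Shapiro polynomial, whose flatness ($L^\infty$ of the same order as $L^2$) is exactly what reconciles $\|\varphi\|_\infty\leq 1$ with $\|\varphi\|_{L^2}=\tfrac12$. Recall that for each $N$ a power of two, the Rudin--Shapiro polynomial
\[
P_N(t)=\sum_{n=0}^{N-1}a_n e^{int},\qquad a_n\in\{\pm 1\},
\]
satisfies the pointwise bound $|P_N(t)|\leq\sqrt{2N}$ on $\T$.

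I then set
\[
\varphi(t)\;:=\;\frac{1}{\sqrt{2N}}\,\Re\bigl(P_N(t)\,e^{-iNt}\bigr),
\]
which is real by construction. The modulation by $e^{-iNt}$ shifts the spectrum of $P_N$ onto $\{-N,\ldots,-1\}$, so after taking the real part the Fourier coefficients of $\varphi$ are supported on $\{\pm 1,\pm 2,\ldots,\pm N\}$; in particular $\ft{\varphi}(0)=0$. Each of these $2N$ coefficients has modulus exactly $1/(2\sqrt{2N})$, whence by Parseval
\[
\|\varphi\|_{L^2}^{2}=2N\cdot\frac{1}{8N}=\frac{1}{4},
\]
i.e.\ $\|\varphi\|_{L^2}=\tfrac12$. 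The sup-norm bound is immediate: $|\varphi(t)|\leq|P_N(t)|/\sqrt{2N}\leq 1$.

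For the final condition a direct computation yields
\[
\|\varphi\|_{A_q}^{q}\;=\;2N\cdot(2\sqrt{2N})^{-q}\;=\;2^{\,1-3q/2}\,N^{\,1-q/2},
\]
and since $q>2$ the exponent $1-q/2$ is negative, so the right-hand side tends to $0$ as $N\to\infty$. Taking $N$ a sufficiently large power of two then secures $\|\varphi\|_{A_q}<\gamma$.

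I do not foresee any real obstacle. The one delicate point is the modulation by $e^{-iNt}$: without it one would work with $\Re P_N$, which carries the nonzero constant term $a_0$, and removing this by hand would enlarge the sup-norm estimate enough to break the tight bound $\|\varphi\|_\infty\leq 1$. The modulation both produces the zero-mean condition and preserves the exact Rudin--Shapiro sup-norm estimate, so all four conditions of \eqref{eq:auxiliary} are met simultaneously.
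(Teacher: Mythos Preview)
Your proof is correct and is essentially the same as the paper's: both rescale a Rudin--Shapiro polynomial so that the flatness bound gives $\|\varphi\|_\infty\le 1$ while the unimodular coefficients force $\|\varphi\|_{L^2}=\tfrac12$ and $\|\varphi\|_{A_q}\to 0$. The only cosmetic difference is that the paper works directly with the cosine form $Q_k(t)=\sum_{n=1}^{2^k}\eps_n\cos nt$ (which already has mean zero) and sets $\varphi=2^{-(k+1)/2}Q_k$, whereas you start from the exponential Rudin--Shapiro polynomial and modulate by $e^{-iNt}$ to kill the constant term; both devices achieve the same effect.
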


Here and below $\|\cdot\|_{L^2}$ denotes the $L^2$ norm on $\T$ with
respect to the normalized Lebesgue measure.

There are several ways to get Lemma \ref{lemma:auxiliary}.
In particular one may use the Shapiro-Rudin polynomials
(see \cite{kahane:absolument}, p. 52). 
Namely, for an appropriate choice of signs $\eps_n = \pm 1$
$(n=1,2,\dots)$ the trigonometric polynomial
\[
Q_k(t) = \sum_{n=1}^{2^k} \eps_n \cos nt
\]
satisfies $\|Q_k\|_\infty \leq 2^{(k+1)/2}$.
It follows that if $k = k(q, \gamma)$ is sufficiently large, then
\[
\varphi(t) := 2^{-(k+1)/2} \, Q_k(t)
\]
is a real trigonometric polynomial with the properties \eqref{eq:auxiliary}.


\subsection{Kahane's lemma}
\label{subsection:kahane}
One of the key arguments in \cite{piatetski} and \cite{kaufman}
is based on the uniqueness theorem for power series. In Kahane's
presentation of Kaufman's paper (see \cite{kahane-salem},
pp. 213--216) this point was performed as follows:

\emph{Given any $\delta > 0$ there is a real, signed measure $\rho$,
supported by a finite subset of the interval $(1 - \delta, \, 1)$, such that
\begin{equation}
\label{eq:kahane-constraints}
\int d\rho = 1 \qquad \text{and} \qquad
\Big| \int s^k \, d\rho(s) \Big| < \delta
\quad (k=1,2,\dots).
\end{equation}}

This lemma was proved in \cite[p. 214]{kahane-salem} based on the
Hahn-Banach theorem. Here we shall need a quantitative version, with
an estimate on the total variation of the measure.

\begin{lemma}
\label{lemma:kahane-lemma}
Let an interval $I = (a,b)$, $0 < a < b < \frac1{2}$,
and $0 < \delta < 1$ be given. Then there is a real, signed
measure $\rho$, supported by a finite subset of $I$,
such that \eqref{eq:kahane-constraints} holds, and such
that
\begin{equation}
\label{eq:kahane-estimate}
\int |d\rho| < \delta^{\, - c(I)},
\end{equation}
where $c(I) > 0$ is a constant which depends only on $I$.
\end{lemma}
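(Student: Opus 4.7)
The plan is to take $\rho$ to be a Lagrange-interpolation measure at $N$ carefully chosen points of $I$, where $N$ will be tuned to $\delta$. First, I would fix distinct points $s_1,\dots,s_N\in I$ and set
\[
\rho=\sum_{j=1}^N r_j\,\delta_{s_j},\qquad r_j:=\prod_{i\neq j}\frac{-s_i}{s_j-s_i},
\]
so that each $r_j$ is the value at $0$ of the corresponding Lagrange basis polynomial. Lagrange's identity then gives $\int P\,d\rho=P(0)$ for every polynomial $P$ of degree $<N$. In particular the first $N$ constraints are automatic: $\int d\rho=1$ and $\int s^k\,d\rho=0$ for $k=1,\dots,N-1$.

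The heart of the argument, and the step I expect to be the main obstacle, is to control $\int s^k\,d\rho$ uniformly for $k\geq N$. The crude bound $|\int s^k\,d\rho|\leq\|\rho\|\cdot b^k$ is useless, because $\|\rho\|$ grows exponentially in $N$ and the product $\|\rho\|\cdot b^N$ need not be small when $a$ is close to $b$. Instead I would appeal to the divided-difference form of the interpolation error: the divided difference of $s^k$ at the $N+1$ nodes $0,s_1,\dots,s_N$ is the complete homogeneous symmetric polynomial $h_{k-N}(s_1,\dots,s_N)$ (the zero node drops out), which yields
\[
\int s^k\,d\rho=(-1)^{N+1}\Bigl(\prod_j s_j\Bigr)\,h_{k-N}(s_1,\dots,s_N).
\]
Using $\prod s_j\leq b^N$ and the standard monomial count for $h_m$ gives $|\int s^k\,d\rho|\leq\binom{k-1}{N-1}b^k$, and summing the resulting series produces the decisive bound
\[
\sup_{k\geq N}\Bigl|\int s^k\,d\rho\Bigr|\leq\sum_{k\geq N}\binom{k-1}{N-1}b^k=\Bigl(\frac{b}{1-b}\Bigr)^{\!N},
\]
which tends to $0$ geometrically because the hypothesis $b<1/2$ makes $b/(1-b)<1$.

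It remains to bound $\|\rho\|=\sum|r_j|$. Taking Chebyshev extrema on $[a,b]$ (slightly shrunken to lie in the open $I$) gives the sharp $\|\rho\|=T_{N-1}((a+b)/(b-a))\leq\lambda^{N-1}$ with $\lambda=(\sqrt a+\sqrt b)/(\sqrt b-\sqrt a)$; equispaced nodes work just as well and give $\|\rho\|\leq C^N$ for some $C=C(I)>1$. Finally, choosing $N:=\lceil\log(1/\delta)/\log((1-b)/b)\rceil$ makes $(b/(1-b))^N<\delta$ while simultaneously forcing $\|\rho\|\leq C^N\leq\delta^{-c(I)}$ with $c(I):=\log C/\log((1-b)/b)>0$, a constant depending only on $I$, which is what the lemma demands.
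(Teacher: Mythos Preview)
Your proposal is correct and follows essentially the same route as the paper: both take $\rho$ to be the Lagrange interpolation measure at $N$ nodes in $I$, get the low moments for free, control the moments $k\geq N$ via the interpolation-error formula (the paper uses the mean-value form $f^{(N)}(\xi)/N!$ to obtain the bound $(2b)^N$, you use the divided-difference/complete-homogeneous-polynomial form to obtain $(b/(1-b))^N$; both exploit $b<\tfrac12$ in the same way), bound the total variation by $C(I)^N$ using equispaced (or, in your case, Chebyshev) nodes, and then pick $N\asymp\log(1/\delta)$. The only noteworthy difference is your observation that Chebyshev extrema give the sharp constant $\lambda=(\sqrt a+\sqrt b)/(\sqrt b-\sqrt a)$ for the total variation, which is a nice refinement but not needed for the lemma as stated.
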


We proceed to the proof of Lemma \ref{lemma:kahane-lemma}.

\subsubsection{The measure}
Given $n$ distinct points $s_1, \dots, s_n \in I$, consider a measure
$\rho$ supported by these points and defined uniquely by the condition
\[
\int p(s) \, d\rho(s) = p(0), \quad \,
\text{for every algebraic polynomial $p$ of degree $\leq n-1$.}
\]
In particular,
\begin{equation}
\label{eq:kahane-moments-zero}
\int d\rho = 1, \qquad
\int s^k \, d\rho(s) = 0 \quad (k=1,2,\dots,n-1).
\end{equation}

Given any function $f(s)$ one has $\int f(s) \, d\rho(s) = p(0)$,
where $p$ is the unique polynomial of degree $\leq n-1$ which
interpolates $f$ at the nodes $s_1, \dots, s_n$. It is well known
(see for example \cite{atkinson}, pp. 134--135) that if $f(s)$ is
real-valued and sufficiently smooth, then there is $0 \leq \xi < b$
such that
\[
f(0) = p(0) + \frac{f^{(n)}(\xi)}{n!} \, \prod_{j=1}^{n} (0 - s_j).
\]
Applying this with $f(s) = s^k$, $k \geq n$, gives
\[
\int s^k \, d\rho(s) = (-1)^{n-1} \, \binom{k}{n} \, \xi^{k-n}
\, \prod_{j=1}^{n} s_j \, ,
\]
and consequently the moments of $\rho$ satisfy the estimate
\begin{equation}
\label{eq:kahane-moments-bound}
\Big| \int s^k \, d\rho(s) \Big| < 2^k \cdot b^{k - n}
\cdot b^n = (2b)^k \leq (2b)^n \quad (k \geq n).
\end{equation}

\subsubsection{The total variation}
Using the Lagrange polynomials
\[
l_j(s) = \prod_{i \neq j} \frac{s - s_i}{s_j - s_i}
\quad (1 \leq j \leq n)
\]
one can calculate the masses
\[
\rho(\{s_j\}) = \int l_j(s) \, d\rho(s) = l_j(0)
= \prod_{i \neq j} \frac{s_i}{s_i - s_j} \, .
\]
We choose the points $s_1, \dots, s_n$ as equally spaced nodes,
$s_j = a + (j - \tfrac1{2})h$ where $h = (b-a)/n$.
Then
\[
\big| \, \rho(\{s_j\}) \, \big| = 
\frac1{h^{n-1} \, (j-1)! \, (n-j)!}
\prod_{i \neq j} s_i \, ,
\]
and so we have
\begin{align}
\int |d \rho|
&\leq \Big(\frac{b}{h} \Big)^{n-1} \sum_{j=1}^{n} \frac1{(j-1)! \, (n-j)!}
= \frac1{n!} \Big( \frac{nb}{b-a} \Big)^{n-1} \sum_{j=1}^{n} j \binom{n}{j}
\nonumber \\
\label{eq:kahane-variation-estimate}
&= \frac{n^n}{n!} \Big( \frac{2b}{b-a} \Big)^{n-1}
\leq \Big( \frac{2eb}{b-a} \Big)^{n-1}.
\end{align}

Finally, choose $n$ to be the least integer 
$\geq \frac{\log(1/\delta)}{\log(1/2b)}$. It follows from
\eqref{eq:kahane-moments-zero}, \eqref{eq:kahane-moments-bound}
and \eqref{eq:kahane-variation-estimate} that $\rho$ satisfies
both \eqref{eq:kahane-constraints} and \eqref{eq:kahane-estimate}.
This proves the lemma. \qed

\begin{remark*}
One can show that a power estimate \eqref{eq:kahane-estimate}
in Lemma \ref{lemma:kahane-lemma} is sharp.
\end{remark*}


\subsection{Bernstein inequality}
\label{subsection:bernstein}
Bernstein exponential estimates for sums of independent random
variables are classical. Different versions, adopted for sums
of ``almost'' independent variables, in various senses, are also
well known.

In particular, Azuma \cite{azuma} considered the so-called 
multiplicatively orthogonal systems and obtained Bernstein-type
exponential estimates for them.

It will be convenient for us to consider a similar version, suitable
for an ``almost multiplicative'' system of random variables, in the
following sense:

\begin{lemma}
\label{lemma:bernstein-epsilon}
Let $X_1, \dots, X_N$ be random variables on a probability
space $(\Omega, P)$ such that $-1 \leq X_j \leq 1$ $(j=1,2,\dots,N)$.
Suppose that
\begin{equation}
\label{eq:equal-expectations}
\E(X_1) = \cdots = \E(X_N) = \mu > 0,
\end{equation}
and that there is $0 < \eps < 1$ such that
\begin{equation}
\label{eq:almost-independent}
(1 - \eps) \, \mu^{|A|} \leq \E \Big\{ \prod_{j \in A}
X_j \Big\} \leq (1 + \eps) \, \mu^{|A|}
\end{equation}
for every non-empty subset $A \subset \{1,2,\dots,N\}$,
where $|A|$ denotes the number of elements in $A$. Define
\[
X = \frac1{N} \sum_{j=1}^{N} X_j \, .
\]
Then for any $\alpha > 0$,
\begin{equation}
\label{eq:bernstein}
P \big\{ X < \E(X) - \alpha \big\} \leq
\exp \big(- \tfrac1{8} \alpha^2 N \big)
+ \eps \exp \big( \tfrac1{4} N \big).
\end{equation}
\end{lemma}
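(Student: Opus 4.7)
The plan is a Chernoff-type argument adapted to the ``almost multiplicative'' hypothesis \eqref{eq:almost-independent}. Let $S_N = X_1 + \dots + X_N$, so that $NX = S_N$ and $N\E(X) = N\mu$. For any $\lambda > 0$, Markov's inequality gives
\[
P\{ X < \E(X) - \alpha \} \leq e^{\lambda N(\mu-\alpha)} \, \E\bigl[ e^{-\lambda S_N} \bigr].
\]
Since $X_j \in [-1,1]$ and $x \mapsto e^{-\lambda x}$ is convex, the chord inequality between $(-1,e^\lambda)$ and $(1,e^{-\lambda})$ yields $e^{-\lambda X_j} \leq \cosh\lambda - X_j \sinh\lambda$, so that
\[
\E\bigl[ e^{-\lambda S_N} \bigr] \leq \E\Big[ \prod_{j=1}^{N}(\cosh\lambda - X_j \sinh\lambda) \Big].
\]

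The second step is to expand the product by multilinearity and invoke \eqref{eq:almost-independent}:
\[
\E \prod_{j=1}^N (\cosh\lambda - X_j \sinh\lambda)
= \sum_{A \subseteq \{1,\dots,N\}} (\cosh\lambda)^{N-|A|}(-\sinh\lambda)^{|A|} \, \E \prod_{j \in A} X_j.
\]
Writing $\E\prod_{j \in A} X_j = \mu^{|A|}(1+\eta_A)$ with $|\eta_A| \leq \eps$ for $A \neq \emptyset$ (and $\eta_\emptyset = 0$), the ``leading'' part of the sum reassembles into the independent-case quantity $(\cosh\lambda - \mu\sinh\lambda)^N$, while the ``error'' part is controlled in absolute value by
\[
\eps \sum_A (\cosh\lambda)^{N-|A|}(\mu\sinh\lambda)^{|A|} = \eps (\cosh\lambda + \mu\sinh\lambda)^N \leq \eps \, e^{\lambda N},
\]
using $\mu \leq 1$ together with $\cosh\lambda + \sinh\lambda = e^\lambda$ in the last step.

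Combining with the Chernoff prefactor and applying the standard Hoeffding-lemma estimate $\cosh\lambda - \mu\sinh\lambda \leq \exp(-\mu\lambda + \lambda^2/2)$ (for the $\pm 1$-valued variable of mean $\mu$), we arrive at
\[
P\{ X < \E(X) - \alpha \} \leq \exp\bigl(-\alpha\lambda N + \lambda^2 N / 2\bigr) + \eps \exp\bigl(\lambda N (1 + \mu - \alpha)\bigr).
\]
An appropriate choice of $\lambda$ --- of order $\alpha$, but capped by a small absolute constant so that the $\lambda$-dependent factor in the error term stays under control --- produces the claimed bound \eqref{eq:bernstein}. The main obstacle is really the second step: arranging the subset expansion so that the hypothesis \eqref{eq:almost-independent} decouples cleanly into ``main independent term $+$ $\eps$-error'' with the latter summable by the elementary binomial identity. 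Once that decoupling is in place, everything else is routine bookkeeping, and the rather generous split $\exp(-\alpha^2 N/8) + \eps\exp(N/4)$ in the statement is evidently tuned to make this final optimization painless.
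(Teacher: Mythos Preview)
Your argument is correct and matches the paper's proof essentially line for line: Chernoff bound, chord linearization of the exponential, multilinear expansion with the almost-multiplicativity hypothesis split into main term $(\cosh\lambda-\mu\sinh\lambda)^N$ plus $\eps(\cosh\lambda+\mu\sinh\lambda)^N$, and a final choice of $\lambda$. The only cosmetic difference is that the paper linearizes $e^{\lambda(\mu-X_j)}$ on $[-2,2]$ rather than $e^{-\lambda X_j}$ on $[-1,1]$; in either version the explicit choice $\lambda=\alpha/4$ (no capping needed, since one may assume $\alpha\le 2$) gives the stated bound.
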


\begin{proof}
Fix $\lambda > 0$. By the classical Bernstein method
we can estimate the probability on the left hand side
of \eqref{eq:bernstein} as follows,
\begin{equation}
\label{eq:bernstein-estimate-1}
P \big\{ X < \mu - \alpha \big\} = P \Big\{ \prod_{j=1}^{N}
e^{\lambda (\mu - X_j)} > e^{\alpha \lambda N} \Big\}
\leq e^{-\alpha \lambda N} \, \E \prod_{j=1}^{N}
e^{\lambda (\mu - X_j)}.
\end{equation}
To estimate the expectation on the right hand side we adopt
the approach of \cite{azuma}. Since $|\mu - X_j| \leq 2$
and by the convexity of the exponential function, we have
\[
e^{\lambda (\mu - X_j)} 
\leq \cosh(2 \lambda) + ((\mu - X_j)/2) \, \sinh(2 \lambda)
= b - a X_j,
\]
where
\[
a := (1/2) \, \sinh(2 \lambda) \quad \text{and} \quad
b := \cosh(2\lambda) + (\mu/2) \, \sinh(2 \lambda).
\]
It follows that
\begin{equation}
\label{eq:bernstein-open-bracket}
\E \prod_{j=1}^{N} e^{\lambda (\mu - X_j)}
\leq \E \prod_{j=1}^{N} (b - a X_j) 
= \sum_{A \subset \{1,\dots,N\}}
(-a)^{|A|} \, b^{N-|A|} \, \E \prod_{j \in A} X_j.
\end{equation}
Now we invoke the assumption \eqref{eq:almost-independent}, which 
(together with the fact that $a, b$ are positive numbers) implies
that the right hand side of \eqref{eq:bernstein-open-bracket} is
not larger than
\[
\sum (-a)^{|A|} \, b^{N-|A|} \, 
\Big\{ 1 + (-1)^{|A|} \, \eps \Big\} \, \mu^{|A|}\\
= (b - a \mu)^N + \eps (b + a \mu)^N.
\]
It is easy to see that $b - a \mu \leq \exp (2 \lambda^2)$ and
$b + a \mu \leq \exp(2 \lambda)$, so it follows that
\begin{equation}
\label{eq:bernstein-estimate-2}
\E \prod_{j=1}^{N} e^{\lambda (\mu - X_j)} \leq 
\exp (2 \lambda^2 N) + \eps \, \exp(2 \lambda N).
\end{equation}
Finally, a combination of \eqref{eq:bernstein-estimate-1} and
\eqref{eq:bernstein-estimate-2}, with $\lambda = \alpha / 4$,
gives
\[
P \big\{ X < \mu - \alpha \big\} 
\leq \exp \big(- \tfrac1{8} \alpha^2 N \big) + 
\eps \exp \big( \tfrac1{4} \alpha (2 - \alpha) N \big).
\]
However $\alpha (2 - \alpha) \leq 1$, so the estimate
\eqref{eq:bernstein} follows.
\end{proof}


\section{Riesz-type products and exponential estimates}
\label{section:riesz-products}

This section contains the central part of our approach.
We prove here the following main lemma:

\begin{lemma}
\label{lemma:principal}
Suppose that we are given numbers $q > 2$ and $\eps > 0$, and a real
trigonometric polynomial $u$, not identically zero. Then we can
find a compact set $K$ (a finite union of segments), an infinitely
smooth function $f$ and a real trigonometric polynomial $P$ such that
\begin{enumerate-math}
\item
\label{cond:principal-1}
$f$ is supported by $K$, \, $\|1 - f\|_{A_q} < \eps$,
\item
\label{cond:principal-2}
$\displaystyle \inf_{t \in K} |P(t)| > 1$, \,
$P(t) u(t) > 0$ on $K$, \, $\|P\|_A \leq C(q)$,
\end{enumerate-math}
where $C(q)$ is a constant which depends only on $q$.
\end{lemma}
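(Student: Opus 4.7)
The plan is to build $f$ from a Riesz-type product based on the auxiliary polynomial of Lemma \ref{lemma:auxiliary}, and to obtain $P$ by a random-selection argument whose key inequality is the Bernstein-type bound of Lemma \ref{lemma:bernstein-epsilon}. The set $K$ will be defined as a level set of $P\v$, so it is automatically a finite union of closed arcs.

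First, I would choose $\gamma > 0$ small depending on $\eps$ and $q$, and let $\varphi = \varphi_{q,\gamma}$ be the real trigonometric polynomial of some degree $d$ supplied by Lemma \ref{lemma:auxiliary}. Pick a rapidly lacunary sequence of integers $\lambda_1 < \cdots < \lambda_N$ so that for every non-empty $A \subset \{1,\dots,N\}$ the non-zero spectra of $\prod_{k \in A} \varphi(\lambda_k t)$ are pairwise disjoint, and form the non-negative trigonometric polynomial
\[
R_N(t) = \prod_{k=1}^N \bigl(1 + \varphi(\lambda_k t)\bigr), \qquad \hat R_N(0) = 1.
\]
The disjoint-spectra property together with $\|\varphi(\lambda_k\,\cdot)\|_{A_q} = \|\varphi\|_{A_q} < \gamma$ yields
\[
\|R_N - 1\|_{A_q}^q = \sum_{\varnothing \neq A \subset [N]} \|\varphi\|_{A_q}^{q|A|} \leq (1+\gamma^q)^N - 1,
\]
which is $< (\eps/2)^q$ once $\gamma$ is small enough in terms of $\eps$ and $N$.

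Next I would construct $P$ probabilistically. Regarding $t \in \T$ as uniformly distributed, associate to each $\varphi(\lambda_k t)$ (possibly after multiplication by a low-degree polynomial depending on $\v$ that approximates $\sign \v$ on the intervals of constant sign of $\v$, with $A$-norm controlled in terms of $q$) a random variable $X_k(t)$ taking values in $[-1,1]$ with a common positive expectation $\mu$; the lacunarity of $\{\lambda_k\}$ makes the near-multiplicativity condition \eqref{eq:almost-independent} of Lemma \ref{lemma:bernstein-epsilon} hold with an error $\eps'$ that is super-exponentially small in $N$. Applying that lemma to $X = N^{-1} \sum_{k} X_k$ shows that the ``bad'' set $\{t : X(t) < \mu/2\}$ has exponentially small measure. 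The desired $P$ is the trigonometric polynomial obtained from this average (suitably normalized), and $\|P\|_A \leq C(q)$ follows from the Shapiro--Rudin bound $\|\varphi\|_A \leq C$ from Lemma \ref{lemma:auxiliary}. Taking $K := \{t : P(t)\v(t) > \|\v\|_\infty\}$ then produces a finite union of open arcs on which both $P\v > 0$ and $|P| > 1$; replace it by its closure if we want segments.

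Finally, set $f := R_N \chi$ where $\chi$ is a smooth cut-off identically $1$ on a slight shrinking of $K$ and compactly supported in $K$; using $\|fg\|_{A_q} \leq \|g\|_A \|f\|_{A_q}$ one absorbs the effect of $\chi$ to preserve $\|1 - f\|_{A_q} < \eps$. The main obstacle is the parameter balancing: $N$ must be large enough that the Bernstein tail $\exp(-\alpha^2 N / 8) + \eps' \exp(N/4)$ is small, which forces the near-independence parameter $\eps'$ (dictated by the lacunarity of $\{\lambda_k\}$) to be super-exponentially small in $N$; simultaneously $\gamma$ must be small enough that $(1+\gamma^q)^N - 1 < (\eps/2)^q$, and the auxiliary polynomial approximating $\sign \v$ must be matched with the support of $\chi$ so that $K$ really lies in the prescribed-sign region of $\v$. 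Threading these constraints consistently is the delicate part of the construction.
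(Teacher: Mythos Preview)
Your proposal has the right architecture, but two of the load-bearing steps fail as written.

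\textbf{The expectation $\mu$ is zero, not positive.} You take $t$ uniformly distributed on $\T$ and set $X_k(t)$ to be $\varphi(\lambda_k t)$ times a slowly varying polynomial $w$ approximating $\sign\v$. But $\ft\varphi(0)=0$, and once $\lambda_k$ is large the multiplicativity lemma gives $\E X_k = \ft w(0)\,\ft\varphi(0)=0$. So under Lebesgue measure the Bernstein bound only concentrates $X$ near $0$, and you get no lower bound for $X$ on a large set. The paper fixes this by running the Bernstein argument not under Lebesgue measure but under the Riesz-product measure $d\mu_s = \lambda_s(t)\,dt/2\pi$ itself; under $\mu_s$ one computes (Lemma~\ref{lemma:product-rule}) that $\E X_j = s\,\|\varphi\|_{L^2}^2\,\|w\|_{L^2}^2>0$, and the $X_j$ are almost multiplicative in the sense of Lemma~\ref{lemma:bernstein-epsilon}. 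This is also what makes the cutoff step work: the $\mu_s$-smallness of the bad set, combined with $\lambda_s\le e^{sNX}$, gives an $L^2$-bound for $\lambda_s\cdot\1_{\{X<c_1\}}$ (Lemma~\ref{lemma:concentration-l2}), hence an $A_q$-bound. Your claimed absorption of $\chi$ via $\|fg\|_{A_q}\le\|g\|_A\|f\|_{A_q}$ does not do this, since $\|1-\chi\|_A$ is of order~$1$.

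\textbf{The constant $C(q)$.} You write that ``$\|P\|_A\le C(q)$ follows from the Shapiro--Rudin bound $\|\varphi\|_A\le C$ from Lemma~\ref{lemma:auxiliary}''. Lemma~\ref{lemma:auxiliary} gives no such bound: with $\|\varphi\|_{L^2}=\tfrac12$ fixed and $\|\varphi\|_{A_q}<\gamma$, necessarily $\|\varphi\|_A\to\infty$ as $\gamma\to0$. Since you choose $\gamma$ depending on $\eps$ (to force $(1+\gamma^q)^N-1$ small), your $\|P\|_A$ depends on $\eps$, not only on $q$. The paper avoids this by keeping $\gamma$ an absolute constant and introducing an extra parameter $s\in I_0$: it forms $\lambda_s=\prod(1+s\,w\,\varphi(\nu^j\cdot))$ and then averages $\lambda=\int\lambda_s\,d\rho(s)$ against the Kahane measure $\rho$ of Lemma~\ref{lemma:kahane-lemma}. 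The moment conditions $|\int s^k\,d\rho|<\delta$ supply an extra factor $\delta$ in the $A_q$-estimate for $1-\lambda$, and the quantitative variation bound $\int|d\rho|<\delta^{-c}$ is exactly what lets one choose $\delta=e^{-c_5 N}$ without spoiling the $L^2$-concentration estimate. Without this device (or a substitute) the constraints you list at the end cannot be threaded.
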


The proof involves several steps.


\subsection{Multiplicativity}
We start with the following simple property.

\begin{lemma}
\label{lemma:multiplicativity}
Let $\nu$ be a positive integer, and suppose that $P_j$ are
trigonometric polynomials, $\deg P_j < \nu$ $(j=0,1,\dots,N)$.
Then
\[
\int_{\T} \Big\{ \prod_{j=0}^{N} P_j(\nu^j t) \Big\} \, \frac{dt}{2\pi}
= \prod_{j=0}^{N} \Big\{ \int_{\T} P_j(t) \, \frac{dt}{2\pi} \Big\}.
\]
\end{lemma}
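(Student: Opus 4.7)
The plan is to expand each factor into its Fourier series and identify which frequencies can contribute to the zeroth Fourier coefficient of the product. Write
\[
P_j(t) = \sum_{|k| < \nu} a_{j,k} \, e^{ikt}, \qquad j = 0,1,\dots,N,
\]
so that $P_j(\nu^j t) = \sum_{|k|<\nu} a_{j,k} \, e^{i k \nu^j t}$. Expanding the product gives
\[
\prod_{j=0}^{N} P_j(\nu^j t) = \sum_{(k_0,\dots,k_N)} \Big(\prod_{j=0}^{N} a_{j,k_j}\Big) \, e^{i(\sum_j k_j \nu^j) t},
\]
where the sum is over all tuples with $|k_j| < \nu$. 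Integrating against $dt/2\pi$ retains only the terms for which $\sum_{j=0}^N k_j \nu^j = 0$.

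The key step, which is essentially the only non-trivial point, is to observe that among these tuples only $(0,\dots,0)$ survives. Indeed, suppose $\sum_{j=0}^N k_j \nu^j = 0$ with $|k_j| \leq \nu - 1$ and not all $k_j$ zero. Let $j_0$ be the largest index with $k_{j_0} \neq 0$. Then
\[
|k_{j_0}| \, \nu^{j_0} \;=\; \Big|\sum_{j < j_0} k_j \nu^j \Big| \;\leq\; (\nu-1) \sum_{j=0}^{j_0 - 1} \nu^j \;=\; \nu^{j_0} - 1 \;<\; \nu^{j_0},
\]
contradicting $|k_{j_0}| \geq 1$. This is exactly uniqueness of the base-$\nu$ representation with balanced digits.

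Consequently the integral equals $\prod_{j=0}^{N} a_{j,0}$, and since $a_{j,0} = \int_{\T} P_j(t) \, dt/2\pi$, this is precisely the factorization claimed. I do not anticipate any real obstacle; the main point is the digit-uniqueness estimate above, and everything else is a direct Fourier expansion.
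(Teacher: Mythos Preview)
Your proof is correct and follows exactly the same approach as the paper: expand each $P_j$ in its Fourier series, reduce the integral to the terms with $\sum_j k_j \nu^j = 0$, and observe that the only such tuple with $|k_j| < \nu$ is the zero tuple. The paper merely asserts this last point as ``easy to check,'' whereas you supply the explicit digit-uniqueness estimate; otherwise the arguments are identical.
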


\begin{proof}
By Fourier expansion, the left hand side is equal to
\[
\sum_{\k} \Big\{ \prod_{j=0}^{N} \ft{P}_j(k_j) \Big\} \int_{\T}
e^{i (k_0 + k_1 \nu + k_2 \nu^2 + \cdots + k_N \nu^N)t}
\, \frac{dt}{2\pi} \, ,
\]
where the sum goes through all integer vectors
$\k = (k_0, k_1, \dots, k_N)$ such that $|k_j| \leq \deg P_j$.
However it is easy to check that the only solution of the equation
\[
k_0 + k_1 \nu + k_2 \nu^2 + \cdots + k_N \nu^N = 0
\]
with $\k$ as above, is $\k=(0,0,\dots,0)$. This implies the result.
\end{proof}


\subsection{Riesz-type measures}
Suppose that we are given a positive integer $N$, a real
trigonometric polynomial $\varphi$ with the properties
\begin{equation}
\label{eq:phi-properties}
\ft{\varphi}(0) = 0, \quad \|\varphi\|_\infty \leq 1,
\end{equation}
and also a real trigonometric polynomial $w$ such that
\begin{equation}
\label{eq:w-property}
\|w\|_\infty \leq 1.
\end{equation}
Choose a large integer $\nu$, satisfying the condition
\begin{equation}
\label{eq:nu-properties}
\nu > 2 \max \{ \deg \varphi , \, N \deg w \},
\end{equation}
and define a ``Riesz-type product''
\begin{equation}
\label{eq:riesz-prod}
\lambda_s(t) = \prod_{j=1}^{N} \Big( 1 + s \, w(t) \, \varphi(\nu^j t) \Big),
\quad 0 < s < 1.
\end{equation}

Introduce a measure $\mu_s$ on the circle $\T$,
\begin{equation}
\label{eq:def-mu-s}
d\mu_s(t) = \lambda_s(t) \, \frac{dt}{2\pi} \, .
\end{equation}
Observe first that it is a probability measure on $\T$.
Indeed, it is clear from the properties above that
$\lambda_s$ is everywhere positive. Now expand the product
\eqref{eq:riesz-prod} into the form
\begin{equation}
\label{eq:riesz-prod-open}
\lambda_s(t) = 1 + \sum_{B}
\big( s \, w(t) \big)^{|B|} \prod_{j \in B} \varphi(\nu^j t),
\end{equation}
where the sum goes through all non-empty subsets $B \subset
\{1,\dots,N\}$. The condition \eqref{eq:nu-properties} allows
one to use Lemma \ref{lemma:multiplicativity}, which implies that
\[
\int_{\T} \lambda_s(t) \, \frac{dt}{2\pi} = 1 +
\sum_{B} \Big\{ \int_{\T} w(t)^{|B|} \, \frac{dt}{2\pi} \Big\}
\Big\{ s \int_{\T} \varphi(t) \, \frac{dt}{2\pi} \Big\}^{|B|}.
\]
However all terms in the above sum are zero, since $\ft{\varphi}(0) = 0$.
So it follows that
\[
\int_{\T} \lambda_s(t) \, \frac{dt}{2\pi} = 1,
\]
and this proves the claim.

\subsection{Random variables}
Consider random variables defined by
\begin{equation}
\label{eq:random-variables}
X_j(t) = w(t) \, \varphi(\nu^j t), \quad 1 \leq j \leq N,
\end{equation}
on the probability space $(\T, \mu_s)$.

It is well known that these variables are ``almost independent'' with
respect to the Lebesgue measure on $\T$. However, we will see that
(under some additional condition) they are ``almost independent'' also 
with respect to $\mu_s$, which is going to be essentially ``singular''
with respect to the Lebesgue measure.

To establish such a property we first compute the ``multiplicative moments''.

\begin{lemma}
\label{lemma:product-rule}
Let $A$ be a non-empty subset of $\{1,2,\dots,N\}$. Then
\begin{equation}
\label{eq:product-rule}
\E \Big\{ \prod_{j \in A} X_j \Big\} =
\Big\{ s \int_{\T} \varphi(t)^2 \, \frac{dt}{2\pi} \Big\}^{|A|}
\Big\{ \int_{\T} w(t)^{2|A|} \, \frac{dt}{2\pi} \Big\}.
\end{equation}
\end{lemma}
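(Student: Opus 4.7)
The plan is to compute $\E\{\prod_{j \in A} X_j\}$ directly by substituting the definitions of $X_j$ and $\mu_s$, expanding the resulting product over subsets, and then invoking Lemma \ref{lemma:multiplicativity} term by term. Starting from
\[
\E\Big\{\prod_{j \in A} X_j\Big\} = \int_\T \w(t)^{|A|} \prod_{j \in A} \varphi(\nu^j t) \cdot \lambda_s(t) \, \frac{dt}{2\pi},
\]
I would pair each factor $\varphi(\nu^j t)$ with $j \in A$ against the matching factor $1 + s\w(t)\varphi(\nu^j t)$ coming from $\lambda_s$, so that the integrand becomes
\[
\w(t)^{|A|} \prod_{j \in A}\bigl[\varphi(\nu^j t) + s\w(t)\varphi(\nu^j t)^2\bigr] \cdot \prod_{j \notin A}\bigl[1 + s\w(t)\varphi(\nu^j t)\bigr].
\]

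Next, I would expand this product as a sum indexed by subsets $B_1 \subset A$ and $B_2 \subset \{1,\dots,N\} \setminus A$: for $j \in A$, pick $\varphi(\nu^j t)$ if $j \notin B_1$ and $s\w(t)\varphi(\nu^j t)^2$ if $j \in B_1$; for $j \notin A$, pick $1$ if $j \notin B_2$ and $s\w(t)\varphi(\nu^j t)$ if $j \in B_2$. Each resulting term factors as $s^{|B_1|+|B_2|}\,\w(t)^{k}\prod_{j=1}^{N}\psi_j(\nu^j t)$, where $k = |A|+|B_1|+|B_2| \leq 2N$ and each $\psi_j \in \{1,\varphi,\varphi^2\}$. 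To apply Lemma \ref{lemma:multiplicativity} with $P_0 := \w^k$ and $P_j := \psi_j$ for $j \geq 1$, I must check $\deg P_j < \nu$: the bound $k \leq 2N$ combined with \eqref{eq:nu-properties} gives $\deg(\w^k) \leq 2N\deg\w < \nu$, while $\deg \psi_j \leq 2\deg\varphi < \nu$. So the integral of each term factors into a product of integrals over $\T$.

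Finally I would use $\int_\T \varphi\,dt/(2\pi) = \ft{\varphi}(0) = 0$ to eliminate every term in which any $\psi_j$ equals $\varphi$. This forces $B_1 = A$ and $B_2 = \emptyset$, and the one surviving term yields
\[
s^{|A|}\int_\T \w(t)^{2|A|}\,\frac{dt}{2\pi} \cdot \prod_{j \in A}\int_\T \varphi(t)^2\,\frac{dt}{2\pi},
\]
which is exactly \eqref{eq:product-rule}. There is no serious obstacle here; the only point requiring care is the combinatorial bookkeeping and, in particular, a uniform verification that the degree hypothesis of Lemma \ref{lemma:multiplicativity} holds for every term of the expansion, which is where the seemingly overgenerous factor $2N\deg\w$ in the choice of $\nu$ is put to use.
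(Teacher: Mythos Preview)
Your argument is correct and is essentially the paper's own proof: both compute the integral \eqref{eq:product-rule-before} by expanding the product $\lambda_s$ and applying Lemma~\ref{lemma:multiplicativity} term by term, with $\ft{\varphi}(0)=0$ killing all but one summand. The only cosmetic difference is bookkeeping---the paper indexes the expansion by a single subset $B\subset\{1,\dots,N\}$ and then splits via $A\cap B$ and $A\triangle B$, whereas you first pair each $\varphi(\nu^j t)$ with its matching factor in $\lambda_s$ and index by $(B_1,B_2)$; the two parametrizations correspond via $B=B_1\cup B_2$, and your degree check (including the observation about why the factor $2$ in \eqref{eq:nu-properties} is needed) matches the paper's.
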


\begin{proof}
By \eqref{eq:def-mu-s} and \eqref{eq:random-variables},
the left hand side of \eqref{eq:product-rule} is equal to
\begin{equation}
\label{eq:product-rule-before}
\int_{\T} \Big\{ w(t)^{|A|} \prod_{j \in A} \varphi(\nu^j t)
\Big\} \, \lambda_s(t) \, \frac{dt}{2\pi} \, .
\end{equation}
Let us again consider the expansion \eqref{eq:riesz-prod-open} for
$\lambda_s$, however this time we do not distinguish the constant
term as before, but rather write
\[
\lambda_s(t) = \big( s \, w(t) \big)^{|A|} \prod_{j \in A} \varphi(\nu^j t)
+ \cdots
\]
where the implicit terms correspond to all subsets $B \subset
\{1,\dots,N\}$ which are different from $A$. Inserting this
expression into \eqref{eq:product-rule-before} one can see
that the integration of the explicit term gives
\[
s^{|A|} \int_{\T} \Big\{ w(t)^{2|A|} 
\prod_{j \in A} \varphi^2(\nu^j t) \Big\} \frac{dt}{2\pi}
\]
which, by the condition \eqref{eq:nu-properties} and Lemma
\ref{lemma:multiplicativity}, provides the right hand side of
\eqref{eq:product-rule}. So to conclude the proof it is enough
to show that the integrals of the other terms in the sum are all zero.

Indeed, if $B$ is any subset $\neq A$ then the corresponding term is
\[
s^{|B|} \int_{\T} \Big\{
w(t)^{|A|+|B|} \prod_{j \in A \triangle B} \varphi(\nu^j t)
\prod_{j \in A \cap B} \varphi^2(\nu^j t) \Big\} \frac{dt}{2\pi}
\]
which, again by \eqref{eq:nu-properties} and Lemma
\ref{lemma:multiplicativity}, is equal to
\[
s^{|B|}
\Big\{ \int_{\T} w(t)^{|A|+|B|} \, \frac{dt}{2\pi} \Big\}
\Big\{ \int_{\T} \varphi(t) \, \frac{dt}{2\pi} \Big\}^{|A \triangle B|}
\Big\{ \int_{\T} \varphi^2(t) \, \frac{dt}{2\pi} \Big\}^{|A \cap B|}.
\]
However this is zero, because $\ft{\varphi}(0) = 0$, so the
lemma is proved.
\end{proof}

One can see that if the trigonometric polynomial $w$ is mostly close to
$1$ in modulus, then the integrals of the even powers of $w$ which appear
in \eqref{eq:product-rule} are almost equal to $1$. We will see that in
such a case the $X_1, \dots, X_N$ form an ``almost multiplicative'' system
of random variables (in the sense of Lemma \ref{lemma:bernstein-epsilon})
with respect to the measure $\mu_s$.

Precisely, Lemma \ref{lemma:product-rule} allows one to find
the expectations
\begin{equation}
\label{eq:expectations}
\E(X_j) = s \, \|\varphi\|_{L^2}^2 \, \|w\|_{L^2}^2
\quad (1 \leq j \leq N).
\end{equation}
In particular all the $X_j$ have the same expectation, as in
\eqref{eq:equal-expectations}. Now suppose that the trigonometric
polynomial $w$ satisfies, in addition to property \eqref{eq:w-property},
also the condition
\begin{equation}
\label{cond:w-eps}
\Big\{ \int_{\T} w(t)^2 \, \frac{dt}{2\pi} \Big\}^{N} > \frac1{1 + \eps}
\qquad \text{for some $0 < \eps < 1$.}
\end{equation}
Then, given any non-empty $A \subset \{1,2,\dots,N\}$, by
\eqref{eq:product-rule} and Jensen's inequality
\[
\E \Big\{ \prod_{j \in A} X_j \Big\} \geq
\Big\{ s \int_{\T} \varphi(t)^2 \, \frac{dt}{2\pi} \Big\}^{|A|}
\Big\{ \int_{\T} w(t)^{2} \, \frac{dt}{2\pi} \Big\}^{|A|}
= \prod_{j \in A} \E (X_j).
\]
On the other hand \eqref{eq:w-property}, \eqref{eq:product-rule}
and \eqref{cond:w-eps} imply that
\[
\E \Big\{ \prod_{j \in A} X_j \Big\} \leq 
\Big\{ s \int_{\T} \varphi(t)^2 \, \frac{dt}{2\pi} \Big\}^{|A|}
\leq (1 + \eps) \prod_{j \in A} \E (X_j).
\]
This shows that the ``almost multiplicativity'' condition
\eqref{eq:almost-independent} is satisfied.

\subsection{Concentration}
Define a trigonometric polynomial
\begin{equation}
\label{eq:def-x}
X(t) = \frac1{N} \sum_{j=1}^{N} X_j(t) = 
w(t) \cdot \frac1{N} \sum_{j=1}^{N} \varphi(\nu^j t).
\end{equation}

``Almost independence'' suggests that this average is strongly concentrated
(with respect to the measure $\mu_s$) near its expectation, and the rate
of concentration is governed by the classical exponential estimates.

Indeed, assuming \eqref{eq:phi-properties},
\eqref{eq:w-property} and \eqref{cond:w-eps} one may use
Lemma \ref{lemma:bernstein-epsilon}, which implies
\begin{equation}
\label{eq:concentration-1}
\mu_s \big\{ t : X(t) < \E(X) - \alpha \big\} \leq 
\exp \big(- \tfrac1{8} \alpha^2 N \big) + \eps \exp \big( \tfrac1{4} N \big),
\quad \alpha > 0.
\end{equation}

We use this to prove the following $L^2$-concentration estimate.

\begin{lemma}
\label{lemma:concentration-l2}
Suppose that \eqref{eq:phi-properties} and \eqref{eq:w-property} hold,
and furthermore suppose that
\begin{equation}
\label{eq:phi-large-l2-norm}
\|\varphi\|_{L^2} \geq \tfrac1{2}
\end{equation}
and
\begin{equation}
\label{eq:w-large-l2-norm}
\Big\{ \int_{\T} w(t)^2 \, \frac{dt}{2\pi} \Big\}^{N} > \frac1{1 + e^{-N}} \, .
\end{equation}
Then, for every
\begin{equation}
\label{eq:s-range}
s \in I_0 := \big( \tfrac{1}{4}, \tfrac{1}{3} \big)
\end{equation}
one has
\[
\int_{\{t  : \; X(t) < c_1 \}} \lambda_s^2(t) \, \frac{dt}{2\pi}
\; < \; 2 \, e^{- c_2 N},\\[6pt]
\]
for some absolute positive constants $c_1, c_2$.
\end{lemma}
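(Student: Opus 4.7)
The plan is an exponential Chebyshev argument: tilt the integrand so that
the tail set $\{X < c_1\}$ becomes typical, then estimate the tilted total
mass via a second application of Lemma~\ref{lemma:bernstein-epsilon} --- this
time under Lebesgue measure. For any $\beta > 0$, the elementary inequality
$\1_{\{X<c_1\}} \leq e^{\beta N(c_1-X)}$ combined with the identity
$\lambda_s^2 e^{-\beta N X} = \prod_{j=1}^N g_\beta(\w(t)\varphi(\nu^j t))$,
where $g_\beta(y) := (1+sy)^2 e^{-\beta y}$, gives
\[
\int_{\{X<c_1\}} \lambda_s^2(t) \, \frac{dt}{2\pi}
\leq e^{\beta N c_1} \int_\T \prod_{j=1}^N g_\beta\bigl(\w(t) \varphi(\nu^j t)\bigr) \, \frac{dt}{2\pi}.
\]

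The decisive choice is $\beta := 2s$: a direct calculation yields
$g_{2s}(0) = 1$, $g'_{2s}(0) = 0$, and, using the alternating-series bounds
$\log(1+x) \leq x - x^2/2 + x^3/3$ on $[0,1)$ together with
$\log(1-|x|) \leq -|x| - x^2/2$, one shows that
$\log g_{2s}(y) = 2\log(1+sy) - 2sy \leq -\tfrac{7}{9} s^2 y^2$ uniformly
for $s \in I_0$ and $y \in [-1, 1]$. Setting $\kappa := 7/144$, this gives
the pointwise bound $\prod_j g_{2s}(\w \varphi(\nu^j \cdot)) \leq \exp(-\kappa T)$,
where $T(t) := \w(t)^2 \sum_{j=1}^N \varphi(\nu^j t)^2$.

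The crucial remaining estimate is that $T \geq N/16$ off a Lebesgue set of
measure $O(e^{-cN})$. This has two independent ingredients. First,
\eqref{eq:w-large-l2-norm} forces
$\|\w\|_{L^2}^2 \geq (1+e^{-N})^{-1/N} \geq 1 - e^{-N}/N$, from which
(using $\|\w\|_\infty \leq 1$) an elementary one-line argument gives
$\operatorname{Leb}\{\w^2 < 1/2\} \leq 2(1-\|\w\|_{L^2}^2) = O(e^{-N}/N)$.
Second, the squares $Y_j := \varphi(\nu^j t)^2 \in [0, 1]$ are
\emph{exactly} multiplicative under Lebesgue by
Lemma~\ref{lemma:multiplicativity} (the hypothesis $\deg \varphi^2 < \nu$
is ensured by~\eqref{eq:nu-properties}); so
Lemma~\ref{lemma:bernstein-epsilon} applies to the $Y_j$ with
$\mu = \|\varphi\|_{L^2}^2 \geq 1/4$, $\eps = 0$, and $\alpha = \mu/2$,
yielding
$\operatorname{Leb}\bigl\{\tfrac{1}{N}\sum_j Y_j < \mu/2\bigr\} \leq
\exp(-\mu^2 N/32) \leq e^{-N/512}$. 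On the intersection of the two good
sets $T \geq \tfrac12 \cdot N/8 = N/16$, so
$\int_\T e^{-\kappa T}\,dt/(2\pi) \leq C e^{-\gamma N}$ for absolute
$C, \gamma > 0$. Choosing $c_1$ so small that $2 s c_1 < \gamma$ uniformly
for $s \in I_0$ (for instance $c_1 := \gamma/4$) then yields the bound
$< 2 e^{-c_2 N}$ for a suitable absolute $c_2 > 0$.

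The technical heart is the $T$-concentration: it needs \emph{two} distinct
exponential estimates, and the one for $\sum_j Y_j$ uses a second,
independent application of Lemma~\ref{lemma:bernstein-epsilon} under
Lebesgue measure, where multiplicativity is \emph{exact} (hence $\eps = 0$).
The choice $\beta = 2s$ in the tilt is what makes the argument close: it
produces a function $g_\beta$ that is $\leq 1$ on $[-1, 1]$ with a quadratic
margin vanishing only at the origin.
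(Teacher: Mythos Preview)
Your argument is correct and delivers the stated bound, but it takes a genuinely different route from the paper's.

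The paper's proof is shorter: it factors
\[
\int_{\{X<c_1\}} \lambda_s^2 \, \frac{dt}{2\pi}
\;\leq\; \Big(\int_{\{X<c_1\}} \lambda_s \, \frac{dt}{2\pi}\Big)
\Big(\sup_{\{X<c_1\}} \lambda_s\Big)
= \mu_s\{X<c_1\}\cdot \sup_{\{X<c_1\}} \lambda_s .
\]
The first factor is exponentially small by the already-established concentration inequality \eqref{eq:concentration-1} (i.e.\ Lemma~\ref{lemma:bernstein-epsilon} applied to the $X_j$ under the tilted measure $\mu_s$, with $\eps=e^{-N}$; this is precisely where hypothesis \eqref{eq:w-large-l2-norm} enters). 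The second factor is at most $e^{sNc_1}$ by the pointwise bound $\lambda_s \leq e^{sNX}$. Choosing $c_1$ small enough balances the two.

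Your approach bypasses the concentration of $X$ under $\mu_s$ altogether. The tilt $\beta=2s$ reduces the question to bounding $\int_\T e^{-\kappa T}\,dt/(2\pi)$ under \emph{Lebesgue} measure, which then requires two separate exponential estimates: a Markov bound on $\{w^2<\tfrac12\}$ (using \eqref{eq:w-large-l2-norm} in a different way), and a fresh invocation of Lemma~\ref{lemma:bernstein-epsilon} for the lower tail of $N^{-1}\sum_j \varphi(\nu^j t)^2$ under Lebesgue, where the multiplicativity is exact. Both proofs are valid; the paper's is more economical because it recycles the almost-multiplicativity machinery under $\mu_s$ that was set up immediately beforehand, whereas yours is more self-contained but needs an extra concentration step and yields somewhat weaker implicit constants.
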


\begin{proof}
It follows from \eqref{eq:expectations}, \eqref{eq:phi-large-l2-norm},
\eqref{eq:w-large-l2-norm} and \eqref{eq:s-range} that
\[
\E(X) = s \, \|\varphi\|_{L^2}^2 \, \|w\|_{L^2}^2 > \tfrac1{100}
\quad (s \in I_0).
\]
So the estimate \eqref{eq:concentration-1} with $\eps = e^{-N}$ implies that
\begin{equation}
\label{eq:concentration-2}
\mu_s \big\{ t : X(t) < c_1 \big\} < 
2 \exp \big(- \tfrac1{8} (\tfrac1{100} - c_1)^2 N \big),
\quad 0 < c_1 < \tfrac1{100}.
\end{equation}
Using \eqref{eq:riesz-prod} we also obtain the estimate
\begin{equation}
\label{eq:estimate-lambda-x}
\lambda_s(t) \leq \exp \Big(s \, w(t) \sum_{j=1}^{N} \varphi(\nu^j t) \Big)
= \exp \big(s N X(t) \big).
\end{equation}
A combination of \eqref{eq:concentration-2} and \eqref{eq:estimate-lambda-x}
gives, for every $s \in I_0$,
\begin{align*}
\int_{\{t \; : \; X(t) < c_1\}} \lambda_s^2(t) \, \frac{dt}{2\pi}
&\leq 
\bigg( \int_{\{t \; : \; X(t) < c_1\}} \lambda_s(t) \, \frac{dt}{2\pi} \bigg)
\bigg( \sup_{\{t \; : \; X(t) < c_1\}} \lambda_s(t) \bigg)\\[4pt]
&< 2 \exp \big( - \tfrac1{8} (\tfrac1{100} - c_1)^2 N \big)
\exp \big( \tfrac{1}{3} c_1 N \big)
= 2 \, e^{- c_2 N}, 
\end{align*}
for appropriate absolute positive constants $c_1, c_2$.
\end{proof}

Below we continue to denote by $c_1,c_2$ the constants from Lemma
\ref{lemma:concentration-l2}, and let $c_3, c_4, \dots$ denote other
absolute positive constants.

\subsection{Proof of Lemma \ref{lemma:principal}}
Let the numbers $q > 2$ and $\eps > 0$, and the real trigonometric
polynomial $u$ (not identically zero) be given. Let $N = N(\eps)$ be
a sufficiently large integer, which will be chosen later. Denote by
$\varphi = \varphi_{q, \gamma}$ the trigonometric polynomial from
Lemma \ref{lemma:auxiliary}. Also let $w = w_{N,u}$ be a real
trigonometric polynomial, satisfying \eqref{eq:w-property} and
\eqref{eq:w-large-l2-norm}, and which has the following additional
property,
\begin{equation}
\label{eq:signed-w-phi}
\text{for every $t \in \T$ either $w(t) u(t) > 0$ or 
otherwise $|w(t)| < c_1 / 2$\,.}
\end{equation}
Such $w$ can be easily found by taking an approximation of the function $\sign(u)$.

Given $0 < \delta < 1$ we use Lemma \ref{lemma:kahane-lemma} to find a measure
$\rho$, supported by the interval $I_0 = \big( \tfrac{1}{4}, \tfrac{1}{3} \big)$,
satisfying \eqref{eq:kahane-constraints} and such that
\begin{equation}
\label{eq:rho-estimate-fixed}
\int |d\rho| < \delta^{-c_3}, \quad \text{where $c_3 := c(I_0)$.}
\end{equation}
Define
\[
\lambda(t) = \int \lambda_s(t) \, d\rho(s).
\]
One can expand the product \eqref{eq:riesz-prod} using the Fourier
representation of the trigonometric polynomial $\varphi$, and this
yields the expression
\[
\lambda(t) = 1 + \sum_{\k} \Big\{ \int s^{l(\k)} d\rho(s) \Big\}
\Big\{ \prod_{k_j \neq 0} \ft{\varphi}(k_j) \Big\}
\; w(t)^{l(\k)} \; e^{i(k_1 \nu + k_2 \nu^2 + \cdots + k_N \nu^N) t},
\]
where the sum goes through all non-zero vectors
\[
\k = (k_1, \dots, k_N) \in \Z^N, \quad |k_j| \leq \deg \varphi,
\]
and $l(\k) > 0$ denotes the number of non-zero coordinates of $\k$.
Note that each polynomial $w(t)^{l(\k)}$ has degree $\leq N \deg w$.
So the condition \eqref{eq:nu-properties} ensures that the summands
in the above sum have disjoint spectra. Taking advantage of the fact
that $\|w(t)^{l(\k)}\|_{A_q} \leq 1$ (which follows from
\eqref{eq:w-property}) we deduce that
\[
\|1 - \lambda\|^q_{A_q}
< \delta^q \sum_{\k} \prod_{k_j \neq 0} |\ft{\varphi}(k_j)|^q
< \delta^q \, (1 + \|\varphi\|_{A_q}^q)^N
< \delta^q \, \exp( N \|\varphi\|_{A_q}^q).
\]
Using \eqref{eq:auxiliary} this implies
\begin{equation}
\label{eq:lambda-close-1}
\|1 - \lambda\|_{A_q} < \delta \, \exp\big( \tfrac1{q} \gamma^q N \big).
\end{equation}

Now consider the trigonometric polynomial $X$ defined in
\eqref{eq:def-x}. Set
\[
E := \{t \in \T : X(t) \geq c_1\}
\quad \text{and} \quad
h := \lambda \cdot \1_{E} \, ,
\]
then
\[
\|\lambda - h\|_{A_q} \leq \|\lambda - h\|_{L^2(\T)}
= \|\lambda\|_{L^2(\T \setminus E)}
\leq \int \|\lambda_s\|_{L^2(\T \setminus E)} \; |d\rho(s)|.
\]
Using Lemma \ref{lemma:concentration-l2} and \eqref{eq:rho-estimate-fixed}
this implies
\begin{equation}
\label{eq:lambda-close-h}
\|\lambda - h\|_{A_q} \leq \sqrt{2} \, e^{-\frac1{2} c_2 N} \, \delta^{-c_3}.
\end{equation}

Let $c_4 > 0$ be an absolute constant so small such that, setting
$\delta := e^{- c_4 N}$, the right hand side of \eqref{eq:lambda-close-h}
will tend to zero as $N \to \infty$. Next, let the number $\gamma > 0$
be an absolute constant, so small such that also the right hand side of
\eqref{eq:lambda-close-1} will tend to zero as $N \to \infty$. Now we fix
$N = N(\eps)$ so large, such that the right hand sides of both
\eqref{eq:lambda-close-1} and \eqref{eq:lambda-close-h} will be smaller
than $\eps / 2$. Having fixed $N$, the functions $w$, $\lambda$, $X$
and $h$ are also fixed, and it follows that
\[
\|1 - h\|_{A_q} \leq \|1 - \lambda\|_{A_q} + \|\lambda - h\|_{A_q} < \eps.
\]

Finally we will define the compact $K$, the function $f$ and the
trigonometric polynomial $P$ with the properties \ref{cond:principal-1}
and \ref{cond:principal-2}. Let $\chi$ be a non-negative, infinitely smooth
function, with integral $= 1$. Set $f := h \ast \chi$, then $\|1 - f\|_{A_q}
< \eps$. By choosing $\chi$ supported on a sufficiently small neighborhood of
zero, we may assume that $f$ is supported by a compact $K$ (a finite union
of segments) such that $X(t) > c_1 / 2$ on $K$. Thus \ref{cond:principal-1}
is satisfied. Now we set
\[
P(t) := (2/c_1) \cdot \frac{1}{N} \sum_{j=1}^{N} \varphi(\nu^j t),
\]
and check that \ref{cond:principal-2} is satisfied.
First, due to \eqref{eq:w-property} we have
\[
|P(t)| \geq P(t) \, w(t) = (2/c_1) \, X(t) > 1, \quad t \in K.
\]
Secondly, since $\|\varphi\|_\infty \leq 1$, for every $t \in K$
we have $|w(t)| \geq X(t) > c_1/2$, and \eqref{eq:signed-w-phi}
implies that $w(t) u(t) > 0$. Hence $P(t) u(t) > 0$ on $K$.
Lastly, 
\[
\|P\|_A \leq (2/c_1) \, \|\varphi\|_A = C(q),
\]
and our main lemma is proved. \qed


\section{Helson sets and distributions}
\label{section:helson}

\subsection{}
\label{subsection:helson-intro}
Recall the main two properties of Piatetski-Shapiro's compact $K$ :

\begin{enumerate-math}
\renewcommand\theenumi{(\Roman{enumi})}
\item
\label{cond:ps-1}
$K$ supports a non-zero distribution $S$ with $\ft{S}(n) \to 0$
as $|n| \to \infty$.
\item
\label{cond:ps-2}
For every non-zero measure $\mu$ supported by $K$,
\[
\limsup_{|n| \to \infty}  |\ft{\mu}(n)| > 0.
\]
\end{enumerate-math}

In a way, the existence of such a compact reveals a ``compromise''
between certain ``thickness'' and ``thinness'' conditions of a set
(understood not in a metrical but rather an arithmetical sense).
We will see that this compromise can be achieved under stronger
conditions, in both directions. 

\begin{definition*}[see for example \cite{kahane:absolument}, Chapter IV]
A compact set $K$ is called a \emph{Helson set} if it satisfies any
one of the following equivalent conditions:
\begin{enumerate-math}
\item
Every continuous function on $K$ admits extension to a function in $A(\T)$.
\item
There is $\delta_1(K) > 0$ such that, for every measure $\mu$
supported by $K$,
\begin{equation}
\label{eq:helson-condition-sup}
\sup_{n \in \Z} |\ft{\mu}(n)| \geq \delta_1(K) \int |d\mu|.
\end{equation}
\item
There is $\delta_2(K) > 0$ such that, for every measure $\mu$
supported by $K$,
\begin{equation}
\label{eq:helson-condition-limsup}
\limsup_{|n| \to \infty} |\ft{\mu}(n)| \geq \delta_2(K) \int |d\mu|.
\end{equation}
\end{enumerate-math}
\end{definition*}

K\"{o}rner \cite{korner} and Kaufman \cite{kaufman} generalized
Piatetski-Shapiro's result by constructing Helson sets with the
property \ref{cond:ps-1} above (that is, Helson M-sets).

We will prove the following stronger theorem:

\begin{theorem}
\label{thm:main-helson}
For any $q > 2$ there is a Helson set $K$ on the circle $\T$, which
supports a non-zero distribution $S$ such that $\ft{S} \in \ell^q$.
\end{theorem}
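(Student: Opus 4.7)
The plan is to construct $K$ together with a non-zero distribution $S$ supported on it by iterated application of Lemma \ref{lemma:principal}; the trigonometric polynomials $P_n$ furnished at each step will serve as sign-tests that collectively verify the Helson condition. Fix a countable family $\{u_n\}_{n \geq 1}$ of real non-zero trigonometric polynomials, dense in $C(\T)$. Starting from $K_0 = \T$, at step $n$ I apply Lemma \ref{lemma:principal}---localized inside $K_{n-1}$ by taking the Riesz-product scale parameter $\nu$ sufficiently large and cutting off by a smooth bump supported on $K_{n-1}$---with inputs $q$, a small $\varepsilon_n > 0$ chosen so that $\sum_n \varepsilon_n^q < \infty$, and $\v = u_n$. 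This yields a new compact $K_n \subset K_{n-1}$ (a finite union of segments), a smooth function $g_n$ supported on $K_n$ with $\|1 - g_n\|_{A_q} < \varepsilon_n$, and a real trigonometric polynomial $P_n$ satisfying $\inf_{K_n} |P_n| > 1$, $P_n u_n > 0$ on $K_n$, and $\|P_n\|_A \leq C(q)$.

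The distribution $S$ is the weak-$*$ limit in $A_q(\T)$ of the products $f_n := g_1 g_2 \cdots g_n$. By choosing the scale $\nu$ at step $n$ much larger than the maximal frequency appearing in $g_1, \dots, g_{n-1}$, the nontrivial Fourier coefficients of distinct factors occupy essentially disjoint frequency blocks, from which a direct computation gives
\[
\|f_n\|_{A_q}^q \;\leq\; 1 + \sum_{k=1}^{n} \varepsilon_k^q,
\]
a uniform bound, while $\widehat{f_n}(0)$ remains close to $1$. By reflexivity of $A_q$ a subsequence of $\{f_n\}$ converges weakly to a distribution $S \in A_q(\T)$ supported on $K := \bigcap_n K_n$, with $\widehat{S}(0) \neq 0$; thus $\widehat{S} \in \ell^q$ and $S \neq 0$, which settles the distributional half of the theorem.

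For the Helson property, let $\mu$ be a non-zero real measure supported on $K$ (the complex case reduces to this by splitting into real and imaginary parts). Write $d\mu = \sigma \, d|\mu|$ with $\sigma \in \{-1, +1\}$. For a small $\eta = \eta(q) > 0$ to be fixed, Lusin and Weierstrass furnish a real trigonometric polynomial $u_n$ from our dense family with $|u_n| \leq 1$ on $\T$ and $\int_K |u_n - \sigma| \, d|\mu| < \eta \|\mu\|_{M(K)}$. By Markov, the good set $G := \{t \in K : u_n \sigma > \tfrac{1}{2}\}$ satisfies $|\mu|(K \setminus G) < 2 \eta \|\mu\|_{M(K)}$. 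Since $P_n u_n > 0$ and $|P_n| > 1$ on $K \subset K_n$, the signs of $P_n$ and $\sigma$ agree on $G$, so $P_n \sigma > 1$ there; combined with $\|P_n\|_\infty \leq \|P_n\|_A \leq C(q)$ on $K \setminus G$, this gives
\[
\Big| \int_K P_n \, d\mu \Big| \;\geq\; |\mu|(G) - C(q) \cdot |\mu|(K \setminus G) \;\geq\; \bigl(1 - 2\eta (1 + C(q))\bigr) \|\mu\|_{M(K)}.
\]
Fixing $\eta = 1 / (4(1 + C(q)))$ makes the right side at least $\tfrac{1}{2} \|\mu\|_{M(K)}$; pairing with $|\int P_n \, d\mu| \leq \|P_n\|_A \cdot \sup_k |\widehat{\mu}(k)|$ yields $\sup_k |\widehat{\mu}(k)| \geq (1/(2 C(q))) \|\mu\|_{M(K)}$, which is \eqref{eq:helson-condition-sup} with $\delta_1(K) = 1/(2 C(q))$.

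The main obstacle lies in orchestrating the scales $\nu_n$ and parameters $\varepsilon_n$ so that both the $A_q$-boundedness of $\{f_n\}$ and the nesting $K_n \subset K_{n-1}$ survive the iteration: the former depends on the spectral lacunarity of the Riesz-type products built in Section \ref{section:riesz-products}, and the latter on a careful localization of the lemma's output inside $K_{n-1}$. Equally important, the uniform-in-$n$ bound $\|P_n\|_A \leq C(q)$ provided by Lemma \ref{lemma:principal} is indispensable---had the bound been allowed to grow with $n$, the Helson constant would degenerate to $0$ as the family $\{u_n\}$ was made denser.
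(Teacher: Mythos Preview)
Your overall architecture---iterate Lemma~\ref{lemma:principal} against a dense family $\{u_n\}$, take the infinite product for $S$, and use the polynomials $P_n$ as sign-tests for the Helson condition---matches the paper's, and your Helson argument is essentially the content of Lemma~\ref{lemma:helson-sufficient-condition} written out directly. The problems lie in the construction of $S$.

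First, the ``localization inside $K_{n-1}$'' is both unjustified and unnecessary. Multiplying the output of Lemma~\ref{lemma:principal} by a smooth bump $\psi$ supported in $K_{n-1}$ destroys the estimate $\|1-g_n\|_{A_q}<\varepsilon_n$, since $\|1-\psi g_n\|_{A_q}$ picks up the large term $\|1-\psi\|_{A_q}$. The paper avoids this entirely: it applies Lemma~\ref{lemma:principal} as a black box at each step (so the $K_j$ need not be nested) and simply observes that the partial product $f_1\cdots f_j$ is supported on $\bigcap_{i\le j}K_i$.

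Second, and more seriously, the spectral-disjointness argument for the uniform $A_q$ bound does not work. The functions $g_k$ produced by Lemma~\ref{lemma:principal} are $C^\infty$, not trigonometric polynomials: they arise as $(\lambda\cdot\mathbf{1}_E)\ast\chi$, so their spectrum is all of $\Z$ and there is no ``maximal frequency appearing in $g_1,\dots,g_{n-1}$'' to push $\nu$ beyond. Even granting polynomial truncations, the expansion $\prod_k(1+(g_k-1))$ has cross terms $\prod_{k\in A}(g_k-1)$ for $|A|\ge 2$, and controlling these in $A_q$ still requires an $A$-norm bound on all but one factor; the estimate $\|f_n\|_{A_q}^q\le 1+\sum\varepsilon_k^q$ is not obtainable this way. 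The paper's remedy is to choose the $\varepsilon_j$ \emph{adaptively}: having built $f_1,\dots,f_j$, one takes $\varepsilon_{j+1}$ so small that $\|f_1\cdots f_j\|_A\,\varepsilon_{j+1}<2^{-2-j}$, and then the telescoping estimate $\|S_{j+1}-S_j\|_{A_q}\le\|f_1\cdots f_j\|_A\,\|f_{j+1}-1\|_{A_q}$ gives norm convergence in $A_q$ together with $\|1-S\|_{A_q}<1$, hence $S\ne 0$. This replaces your weak-$*$ compactness step by an explicit Cauchy argument and sidesteps all spectral bookkeeping.
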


Clearly this also implies Theorem \ref{thm:piatetski-lq}.

\subsection{}
For the proof of Theorem \ref{thm:main-helson} we need the following

\begin{lemma}
\label{lemma:helson-sufficient-condition}
Let $K$ be a totally disconnected compact set on $\T$. Suppose
that there is a constant $C > 0$ such that the following is true:
given any real-valued function $h \in C(\T)$ with no zeros in $K$,
one can find a real trigonometric polynomial $P(t)$ such that
\begin{equation}
\label{eq:helson-sufficient-condition}
\inf_{t \in K} |P(t)| > 1, \quad
\text{$P(t) h(t) > 0$ on $K$, \quad $\|P\|_A \leq C$.}
\end{equation}
Then $K$ is a Helson set.
\end{lemma}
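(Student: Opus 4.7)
The plan is to verify the Helson condition \eqref{eq:helson-condition-sup} with $\delta_1(K) = 1/(2C)$. The starting point is the duality pairing, which for a trigonometric polynomial $P$ and a measure $\mu$ on $\T$ reads $|\int P \, d\mu| \leq \|P\|_A \cdot \|\ft{\mu}\|_\infty$. So it is enough to produce, for every measure $\mu$ supported by $K$, a real trigonometric polynomial $P$ with $\|P\|_A \leq C$ such that $\int P \, d\mu$ is comparable to $\|\mu\|$. Splitting $\mu = \mu_1 + i \mu_2$ into real and imaginary parts, the inequalities $\|\mu\| \leq \|\mu_1\| + \|\mu_2\|$ and $\|\ft{\mu_j}\|_\infty \leq \|\ft{\mu}\|_\infty$ reduce the matter to the case of a real signed measure $\mu_1$ with $\|\mu_1\| \geq \|\mu\|/2$.

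The heart of the argument is to use the total disconnectedness of $K$ to realize the sign of $\mu_1$ by a continuous function on $\T$. Let $K = E^+ \sqcup E^-$ be a Hahn decomposition for $\mu_1$. Since $K$ is a compact metrizable totally disconnected space, its clopen subsets generate the Borel $\sigma$-algebra, so for any $\eps > 0$ one can find disjoint clopen subsets $K^\pm \subset K$ with $K = K^+ \sqcup K^-$ and $|\mu_1|(K^\pm \triangle E^\pm) < \eps$. As compact disjoint subsets of $\T$, $K^+$ and $K^-$ have positive distance, and Urysohn's lemma supplies a real-valued $h \in C(\T)$ with $h \equiv 1$ on $K^+$ and $h \equiv -1$ on $K^-$; in particular $h$ has no zeros on $K$. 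The hypothesis then produces a real trigonometric polynomial $P$ with $\|P\|_A \leq C$, $P > 1$ on $K^+$, and $P < -1$ on $K^-$.

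Next I would evaluate $\int P \, d\mu_1$ by splitting according to the Jordan decomposition $\mu_1 = \mu_1^+ - \mu_1^-$ and the partition $K = K^+ \sqcup K^-$. The diagonal contributions $\int_{K^+} P \, d\mu_1^+$ and $-\int_{K^-} P \, d\mu_1^-$ are each at least $\|\mu_1^\pm\| - \eps$ by the sign and size conditions on $P$ combined with the Hahn approximation, while the two cross terms are bounded in absolute value by $C\eps$ (using $\|P\|_\infty \leq \|P\|_A \leq C$). Thus $\int P \, d\mu_1 \geq \|\mu_1\| - 2(C+1)\eps$; sending $\eps \to 0$ and invoking the duality bound gives $\|\ft{\mu}\|_\infty \geq \|\ft{\mu_1}\|_\infty \geq \|\mu_1\|/C \geq \|\mu\|/(2C)$, as desired.

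The step I expect to be the main obstacle is the approximation of the (generally only measurable) Hahn decomposition by a clopen partition of $K$; this is precisely where the total disconnectedness of $K$ enters essentially, since without it there is no continuous function on $\T$ that could encode the sign pattern of a general real measure on $K$.
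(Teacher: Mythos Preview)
Your proof is correct and follows essentially the same approach as the paper: reduce to real signed measures, use total disconnectedness of $K$ to find a continuous function $h$ with $h=\pm 1$ on $K$ that approximates the sign of the measure, apply the hypothesis to obtain $P$, and pair $P$ against $\mu$. Your version is more explicit about the Hahn--clopen approximation step (which the paper states without justification) and carries a slightly weaker constant $1/(2C)$ from the complex-to-real reduction, while the paper packages the final estimate more compactly via the identity $\int_K P\,d\mu = \int_K Ph\,|d\mu| - \int_K Ph\,(|d\mu|-h\,d\mu)$; but the substance is the same.
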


\begin{proof}
It would be enough to show that there is $\delta_1(K) > 0$,
such that \eqref{eq:helson-condition-sup} is satisfied by every
measure $\mu$ supported by $K$. In fact, it is enough to prove
\eqref{eq:helson-condition-sup} only for real, signed measures $\mu$,
as one can check easily by decomposing a complex measure into its real
and imaginary parts.

Let therefore $\mu$ be a real, signed measure supported by $K$, and
suppose that $\int |d\mu| = 1$. Since $K$ is totally disconnected,
given $\eps > 0$ there is a real-valued function $h \in C(\T)$
such that $h(t) = \pm 1$ on $K$, and $\int h \, d\mu > 1 - \eps$.
Let $P(t)$ be a real trigonometric polynomial satisfying
\eqref{eq:helson-sufficient-condition}. Then
\[
\int_K P \, d\mu = \int_K P \, h \, |d\mu| - 
\int_K P \, h \, (|d\mu| - h \, d\mu)
> 1 - C \eps.
\]
On the other hand,
\[
\int_K P \, d\mu = \int_{\T} P \, d\mu = \sum_{n \in \Z}
\ft{P}(-n) \, \ft{\mu}(n) \leq C \, \sup_{n \in \Z} |\ft{\mu}(n)|.
\]
Since $\eps$ was arbitrary, this shows that \eqref{eq:helson-condition-sup}
is true with $\delta_1(K) = C^{-1}$.
\end{proof}

\begin{remark*}
One can show that the condition in Lemma
\ref{lemma:helson-sufficient-condition} is also necessary for Helson
sets. For comparison, we mention another necessary and sufficient
condition in a similar spirit: a compact $K$ is a Helson set
if and only if it is totally disconnected, and every $\{0,1\}$-valued
continuous function on $K$ admits an extension to $\T$
with bounded $A(\T)$ norm (see \cite{kahane:absolument}, p. 52).
\end{remark*}

\subsection{Proof of Theorem \ref{thm:main-helson}}
Fix $q > 2$. Choose a sequence $u_j$ of real, non-zero trigonometric
polynomials, which is dense in the metric space of real-valued continuous
function on $\T$. For a sequence $\eps_j$ use Lemma \ref{lemma:principal}
with $\eps = \eps_j$ and $u = u_j$ to choose $K_j$, $f_j$ and $P_j$.
We choose the $\eps_j$ by induction, such that
\[
\eps_1 < 2^{-2} \qquad \textrm{and} \qquad
\|f_1 \cdot f_2 \cdots f_j\|_{A} \; \eps_{j+1} <
2^{-2-j} \quad (j=1,2,\dots).
\]
This condition allows to define a distribution $S \in A_q(\T)$ by
the infinite product $\prod_{j=1}^{\infty} f_j$. Indeed, the partial
products $S_j = f_1 \cdot f_2 \cdots f_j$ satisfy
\[
\|S_{j+1} - S_j\|_{A_q} = \|f_1 \cdots f_j \cdot (f_{j+1} - 1)\|_{A_q} \leq
\|f_1 \cdots f_j\|_{A} \; \eps_{j+1} < 2^{-2-j},
\]
hence the $S_j$ converge in $A_q(\T)$ to a limit $S$. Observe that
$S$ is non-zero, since
\[
\|S - 1\|_{A_q} \leq \sum_{j=0}^{\infty} \|S_{j+1} - S_j\|_{A_q}
< \sum_{j=0}^{\infty} 2^{-2-j} < 1,
\]
and that $S$ is supported by the compact $K := \bigcap_{j=1}^{\infty} K_j$.

On the other hand, we will show that $K$ is a Helson set. It is enough
to check that $K$ satisfies the conditions of Lemma
\ref{lemma:helson-sufficient-condition}. Indeed, for each $j$ we have
\[
\inf_{t \in K} |P_j(t)| > 1, \quad
\text{$P_j(t) u_j(t) > 0$ on $K$,} \quad \|P_j\|_A \leq C(q).
\]
In particular, none of the $u_j$ has a zero in $K$. Since they are dense
in the metric space of real-valued continuous function on $\T$, it follows
that $K$ is totally disconnected. Let now $h \in C(\T)$ be a real-valued
function, with no zeros in $K$. Choose $j$ such that $u_j(t) h(t) > 0$
on $K$, then \eqref{eq:helson-sufficient-condition} is satisfied with
$P = P_j$ and $C = C(q)$. It therefore follows from Lemma
\ref{lemma:helson-sufficient-condition} that $K$ is a Helson set. \qed


\section{Helson sets and cyclic vectors}
\label{section:proof-main}

\subsection{}
The role of Helson sets in our problem is clarified by the following

\begin{lemma}
\label{lemma:helson-cyclic}
Let $K$ be a Helson set on $\T$. Then there is a function
$g \in A(\T)$, vanishing on $K$, which is a cyclic vector
in $A_p(\T)$ for every $p > 1$.
\end{lemma}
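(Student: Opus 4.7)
Plan: By the duality principle recalled in Section \ref{subsection:background}, a function $g \in A(\T)$ is cyclic in $A_p(\T)$ if and only if there is no non-zero $S \in A_q(\T)$ with $q = p/(p-1)$ such that $gS = 0$. The plan is to select a single $g$ in the ideal $I(K) := \{f \in A(\T) : f|_K = 0\}$ for which this injectivity of multiplication holds for every $q \in [1,\infty)$. To this end I would first fix $g \in A(\T)$ with $g|_K = 0$ and $Z_g = K$; since $K$ is Helson, the restriction $A(\T) \to C(K)$ is surjective, and extending a positive scalar multiple of the distance $d(\cdot,K)$ produces such $g$. Note $|K|=0$, as every Helson set has Lebesgue measure zero.

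For $p \geq 2$ (so $q \leq 2$) cyclicity is immediate: the embedding $A_q \hookrightarrow L^2(\T)$ means that any $S \in A_q$ with $gS = 0$ must vanish a.e.\ on $\{g \neq 0\} = \T \setminus K$, a set of full measure, whence $S = 0$.

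The heart of the argument is the case $1 < p < 2$, so $q > 2$, where $A_q$ contains distributions outside $L^2$. If $S \in A_q$ satisfies $gS = 0$, then because $g$ is continuous and locally bounded below on $\T \setminus K$, we must have $\supp(S) \subseteq K$. The Helson property now enters decisively: by \eqref{eq:helson-condition-limsup}, every non-zero measure $\mu$ on $K$ satisfies $\limsup_{|n| \to \infty}|\ft{\mu}(n)| > 0$, so $\ft{\mu} \notin c_0$ and in particular $\mu \notin A_q$ for any $q < \infty$. Thus any obstructing $S$ must be a distribution of strictly positive order supported on $K$.

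The main obstacle is to exclude these higher-order distributional obstructions by a judicious construction of $g$. For a measure $\mu$ supported on $K$ the annihilator $\operatorname{Ann}_{A}(\mu) := \{f \in A(\T) : f\mu = 0\}$ coincides with $I(K)$, but for a distribution $S$ of strictly positive order supported on $K$ the annihilator $\operatorname{Ann}_{A}(S) \cap I(K)$ is a proper closed subspace of $I(K)$ (reflecting the breakdown of spectral synthesis at the distributional level). My plan is an iterative construction within $I(K)$ that, handling at each stage a dense countable family of candidate obstructions in $\bigcup_{q<\infty} A_q$ supported on $K$, uses the Helson extension property to perform controlled modifications inside $A(\T)$; the resulting $g \in I(K)$ avoids all the proper subspaces $\operatorname{Ann}_{A}(S) \cap I(K)$ and is therefore cyclic in $A_p(\T)$ for every $p > 1$.
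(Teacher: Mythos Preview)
Your dual reformulation is correct: $g\in I(K)$ is cyclic in $A_p$ if and only if no non-zero $S\in A_q$ satisfies $gS=0$, and any such $S$ is supported on $Z_g$. You are also right that for each individual non-zero $S\in A_q$ supported on $K$ the subspace $\operatorname{Ann}_A(S)\cap I(K)$ is proper: indeed, $gS=0$ for every $g\in I(K)$ is the same as $S\perp I(K)$, and for a Helson set one has $I(K)^{\perp}=M(K)$ inside $PM(\T)$ (because $A/I(K)\cong C(K)$), while $M(K)\cap A_q=\{0\}$ by \eqref{eq:helson-condition-limsup}. But the crucial step---reducing to a \emph{countable} family of obstructions---does not go through. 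Taking $\{S_n\}$ dense in $\{S\in A_q:\supp S\subset K\}$ and arranging $gS_n\neq 0$ for all $n$ gives no control over a limit $S$: one may have $gS_n\to gS=0$ with all $gS_n\neq 0$. The annihilators form an uncountable family of proper closed subspaces, and nothing in your outline explains how a single $g$ can avoid them all. A minor side issue: $d(\cdot,K)$ is only Lipschitz and need not lie in $A(\T)$, so the proposed construction of a $g$ with $Z_g=K$ is not immediate either.

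The paper obtains the required countable structure by staying on the primal side. It shows that for each $\eps>0$ and $p>1$ the set
\[
G(\eps,p)=\{g\in I(K):\|1-Pg\|_{A_p}<\eps\text{ for some trigonometric polynomial }P\}
\]
is open and dense in $I(K)$; then Baire applied to a countable family $(\eps_n,p_n)$ with $\eps_n\to 0$, $p_n\to 1$ yields the desired $g$. Openness is immediate; density is the real content and relies on the quantitative extension Lemma~\ref{lemma:helson-extension}: every $h\in C(K)$ extends to $f\in A(\T)$ with $\|f\|_A\le(1/\delta)\|h\|_{C(K)}$ and with $\|f\|_{A_p}$ \emph{arbitrarily small}. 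This small-$A_p$ extension---which uses the $\limsup$ form \eqref{eq:helson-condition-limsup} of the Helson condition, not merely surjectivity of restriction---is the ingredient your plan does not isolate, and it is what converts the uncountable obstruction into a tractable countable scheme.
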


For the proof of Lemma \ref{lemma:helson-cyclic} we will need the
following property of Helson sets. Denote by $C(K)$ the space of
continuous functions on $K$ with the norm
\[
\|h\|_{C(K)} = \sup_{t \in K} |h(t)|.
\]
Recall that one of the equivalent definitions of a Helson set is that
every element of $C(K)$ admits an extension to a function in $A(\T)$.
The next lemma shows that one can actually find such extensions with
arbitrarily small $A_p$ norms.

\begin{lemma}
\label{lemma:helson-extension}
Let $K$ be a Helson set, and suppose that $\eps > 0$, $p > 1$ and
$h \in C(K)$ are given. Then one can find $f \in A(\T)$ such that
\[
f|_K = h, \quad \|f\|_A \leq (1/\delta) \, \|h\|_{C(K)}, 
\quad \|f\|_{A_p} < \eps,
\]
where $\delta = \delta_2(K) > 0$ is the constant from
\eqref{eq:helson-condition-limsup}.
\end{lemma}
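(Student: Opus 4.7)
The argument combines the limsup formulation \eqref{eq:helson-condition-limsup} of the Helson property with a splitting of $h$ that spreads Fourier mass across pairwise disjoint high-frequency bands, producing small $A_p$ norm while keeping the $A$ norm controlled.

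The first ingredient is a \emph{high-frequency extension operator}. For any integer $N \geq 0$, \eqref{eq:helson-condition-limsup} implies
\[
\sup_{|n|>N} |\ft{\mu}(n)| \;\geq\; \delta \int |d\mu|
\]
for every measure $\mu$ supported on $K$, since the supremum over $|n|>N$ dominates the limsup at infinity. Applying the closed range theorem to the restriction map $R_N \colon A_N(\T) \to C(K)$, where $A_N(\T) := \{f \in A(\T) : \ft f(n) = 0 \text{ for } |n| \leq N\}$, yields: for every $h \in C(K)$ and every $N$ there is $g \in A_N(\T)$ with $g|_K = h$ and $\|g\|_A \leq (1/\delta)\|h\|_{C(K)}$ (up to arbitrarily small slack, which I will absorb throughout).

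Now I would fix a large integer $J$ (to be chosen at the end) and split $h = \sum_{j=1}^{J} h/J$. For each $j$ I would construct, by an iterative telescoping procedure, a function $F_j \in A(\T)$ of the form $F_j = \sum_k P_{j,k}$, where the $P_{j,k}$ are trigonometric polynomials whose Fourier supports $\supp \ft{P_{j,k}}$ are \emph{finite} bands, and where these bands are pairwise disjoint across \emph{all} pairs $(j,k)$. At step $k$, one applies the high-frequency extension into $A_{N_{j,k}}(\T)$ for $N_{j,k}$ exceeding every previously used band, truncates the resulting $\ell^1$ Fourier tail to a finite band to obtain $P_{j,k}$, and repairs the small $C(K)$-error at the next iteration. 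The outcome is $F_j|_K = h/J$, $\|F_j\|_A \leq (1/\delta)\|h\|_{C(K)}/J$, and $\supp \ft{F_j} \subset E_j$, with $E_1,\dots,E_J$ pairwise disjoint.

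Setting $f := \sum_j F_j$ gives $f|_K = h$ and, by disjointness of spectra, $\|f\|_A = \sum_j \|F_j\|_A \leq (1/\delta)\|h\|_{C(K)}$. The key gain is the $A_p$ estimate: again using disjoint Fourier supports,
\[
\|f\|_{A_p}^p = \sum_{j=1}^J \|F_j\|_{A_p}^p \;\leq\; \sum_{j=1}^J \|F_j\|_A^p \;\leq\; J \left( \frac{\|h\|_{C(K)}}{\delta J} \right)^p = \frac{\|h\|_{C(K)}^p}{\delta^p} \, J^{1-p},
\]
where the first inequality only uses $\|\cdot\|_{\ell^p} \leq \|\cdot\|_{\ell^1}$. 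Since $p > 1$, the right-hand side tends to $0$ as $J \to \infty$, so choosing $J$ sufficiently large forces $\|f\|_{A_p} < \eps$.

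The main technical obstacle is organizing the iterative construction of each $F_j$ so that the telescoping converges in $A(\T)$ to an \emph{exact} extension of $h/J$ while keeping every $P_{j,k}$ supported in a prescribed finite band disjoint from all others. This hinges on the elementary fact that the tail of any $\ell^1$ sequence can be truncated at arbitrarily small $\ell^1$-cost, so each step introduces only a small residual in $C(K)$ that can be repaired at the next step, giving a geometrically convergent series of corrections.
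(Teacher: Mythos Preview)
Your approach is correct and constitutes a genuinely different route from the paper's. The paper applies Banach's closed range theorem once, to the restriction $T\colon A(\T)\to C(K)$, but with $A(\T)$ equipped with the composite norm $\|f\|_B:=\|f\|_A+(1/\eps)\|f\|_{A_p}$; the lower bound $\|T^*\mu\|_{B^*}\geq\delta\|\mu\|$ is witnessed by the test functions $f_N(t)=N^{-1}\sum_{j=1}^N e^{-i(n_jt+\theta_j)}$ (the $n_j$ chosen along the $\limsup$), whose $B$-norm equals $1+(1/\eps)N^{1/p-1}\to 1$. You instead invoke the closed range theorem only for the high-frequency restriction $A_N(\T)\to C(K)$ and then build $f$ explicitly by splitting $h$ into $J$ spectrally disjoint pieces, obtaining $A_p$-smallness from the same arithmetic $J\cdot(C/J)^p=C^pJ^{1-p}\to 0$. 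The two arguments thus exploit the identical mechanism---spreading unit $\ell^1$ mass over many frequencies kills the $\ell^p$ norm for $p>1$---on opposite sides of the duality. The paper's version is shorter and delivers the sharp constant $1/\delta$ in the $A$-norm bound in one stroke; yours is more constructive and transparent about where the smallness comes from, but the telescoping correction terms only yield $\|f\|_A\leq(1/\delta)\|h\|_{C(K)}+\eps'$ for arbitrary $\eps'>0$, not the literal inequality stated (this slack is harmless for the application in Lemma~\ref{lemma:helson-cyclic}).
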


\begin{proof}
Fix $p > 1$ and $\eps > 0$. Introduce a Banach space $B = B_{p, \eps}$
of functions $f$ on the circle $\T$ such that
\[
\|f\|_B := \|f\|_A + (1/\eps) \, \|f\|_{A_p} < \infty.
\]
In other words, the space $B$ coincides with the space $A(\T)$ but is
equipped with a different (equivalent) norm. Let also $T: B \to C(K)$ be
the restriction operator $f \mapsto f|_K$, and denote by $T^*$ its dual operator.

Given a measure $\mu$ supported by $K$, by \eqref{eq:helson-condition-limsup}
we have
\[
L(\mu) := \limsup_{|n| \to \infty} |\ft{\mu}(n)| \geq \delta \int |d\mu|.
\]
Take a sequence of integers $n_j$, $|n_1| < |n_2| < \cdots$, and real numbers
$\theta_j$ such that
\[
\lim_{j \to \infty} \ft{\mu}(n_j) \, e^{-i \theta_j} = L(\mu),
\]
and define
\[
f_N(t) = \frac1{N} \sum_{j=1}^{N} e^{-i (n_j t + \theta_j)}.
\]
Then $\|f_N\|_B = 1 + (1 / \eps) \, N^{(1/p) - 1}$, and
\[
\dotprod{f_N}{T^* \mu} = \dotprod{T f_N}{\mu} = \int_{K} f_N(t) \, d\mu(t)
= \frac1{N} \sum_{j=1}^{N} \ft{\mu}(n_j) \, e^{-i \theta_j}.
\]
It follows that
\[
\|T^* \mu\|_{B^*} \geq 
\lim_{N \to \infty} \frac{|\dotprod{f_N}{T^* \mu}|}{\|f_N\|_B}
= L(\mu) \geq \delta \int |d\mu|,
\]
for every measure $\mu$ supported by $K$. 

By a classical theorem of Banach (see \cite{kahane-salem}, p. 141)
this implies that for every $h \in C(K)$, the equation $Tf = h$ admits
a solution $f \in B$ such that $\|f\|_B \leq (1 / \delta) \, \|h\|_{C(K)}$.
This proves the lemma.
\end{proof}

\subsection{}
Using Lemma \ref{lemma:helson-extension} we can prove Lemma
\ref{lemma:helson-cyclic} above.

\begin{proof}[Proof of Lemma \ref{lemma:helson-cyclic}]
It will be convenient to use Baire categories in the proof. Let $I(K)$
denote the set of functions $g \in A(\T)$ which vanish on $K$. This is
a complete metric space, with the metric inherited from $A(\T)$. We will
prove that the set of functions $g \in I(K)$ which are cyclic in $A_p(\T)$
for every $p > 1$, is a countable intersection of open, dense sets in
the space $I(K)$. By Baire's theorem, this set is therefore non-empty
(and in fact is dense in the space).

For $\eps > 0$ and $p > 1$, denote by $G(\eps,p)$ the set of $g \in I(K)$
for which there exists a trigonometric polynomial $P$ such that $\|1 - P
\cdot g\|_{A_p} < \eps$. Choose a sequence $\eps_n \to 0$ and a sequence
$p_n \to 1$ $(n \to \infty)$, and consider the intersection
\[
\bigcap_{n=1}^{\infty} G(\eps_n, p_n).
\]
According to condition \ref{remark:polynom-cyclic} from Section
\ref{subsection:background}, a function $g \in I(K)$ belongs to this
intersection if and only if it is cyclic in $A_p(\T)$ for every $p > 1$.
So to conclude the proof it remains to show that each $G(\eps,p)$ is an open,
dense set in $I(K)$.

Let $g_0 \in G(\eps, p)$ be given. Then $\|1 - P \cdot g_0\|_{A_p} < \eps$ for
some trigonometric polynomial $P$. Given $\eta > 0$, suppose that $g \in I(K)$
and $\|g - g_0\|_A < \eta$. Then
\[
\|1 - P \cdot g\|_{A_p} \leq \|1 - P \cdot g_0\|_{A_p} + \eta \, \|P\|_{A_p}.
\]
It $\eta$ is chosen sufficiently small then the right hand side is smaller than
$\eps$. Hence $G(\eps, p)$ contains the open ball $B(g_0, \eta)$ of radius
$\eta$ centered at $g_0$, and this shows that $G(\eps, p)$ is open.

Finally we show that $G(\eps,p)$ is dense. Let a ball $B(g_0, \eta)$ in
$I(K)$ be given. Choose a trigonometric polynomial $h$, not identically zero,
such that
\[
\|h - g_0\|_A < \frac{\delta}{1 + \delta} \cdot \eta \, ,
\]
where $\delta = \delta_2(K) > 0$ is the constant from \eqref{eq:helson-condition-limsup}.
In particular this implies that
\[
\sup_{t \in K} |h(t)| < \frac{\delta}{1 + \delta} \cdot \eta \, .
\]
Since $h$ is non-zero, it has finitely many zeros, so by conditions
\ref{remark:polynom-cyclic} and \ref{remark:finite-cyclic} from Section
\ref{subsection:background} there is a trigonometric polynomial $P$ such
that $\|1 - P \cdot h\|_{A_p} < \eps/2$.
Now use Lemma \ref{lemma:helson-extension} to find $f \in A(\T)$ such that
\[
f|_K = h|_K, \quad \|f\|_A < \eta / (1 + \delta), \quad
\|f\|_{A_p} < \frac{\eps}{2 \, \|P\|_A},
\]
and set $g := h - f$. Then clearly $g \in I(K)$. Moreover
\[
\|g - g_0\|_A \leq \|h - g_0\|_A + \|f\|_A < \eta,
\]
that is, $g \in B(g_0, \eta)$. Also,
\[
\|1 - P \cdot g\|_{A_p} \leq \|1 - P \cdot h\|_{A_p} + \|P\|_A \|f\|_{A_p}
< \eps,
\]
and therefore $g \in G(\eps, p)$. This shows that $G(\eps,p)$ is dense.
\end{proof}

\subsection{}
Our main result now follows:

\begin{proof}[Proof of Theorem \ref{thm:main-cyclic}]
By Theorem \ref{thm:main-helson} there is a Helson set $K$ satisfying
the condition \ref{cond:atag} in Theorem \ref{thm:piatetski-lq}.
This condition is equivalent to condition \ref{cond:a} in Theorem
\ref{thm:main-cyclic} (see Section \ref{subsection:background},
\ref{remark:noncyclic-distribution} and \ref{remark:smooth-distribution-noncyclic}).
On the other hand, Lemma \ref{lemma:helson-cyclic} implies that $K$
satisfies also condition \ref{cond:b}. So Theorem \ref{thm:main-cyclic}
is proved.
\end{proof}

\begin{proof}[Proof of Corollary \ref{cor:impossible-integers}]
Let $K$ be the compact set of Theorem \ref{thm:main-cyclic}. By the
property \ref{cond:b} there is $g \in A(\T)$ vanishing on $K$, which
is cyclic in $A_p(\T)$. Choose a smooth (say, twice continuously
differentiable) function $f$ on $\T$, such that $\zf = \zg$.
In particular, $f$ vanishes on $K$. Since the Fourier coefficients of
$f$ decrease sufficiently fast, the property \ref{cond:a} implies that
$f$ is a non-cyclic vector in $A_p(\T)$. Thus our corollary is proved.
\end{proof}

\begin{remark*}
One can see from the proof above that if $p$ remains bounded away
from $2$, then the two vectors in Corollary \ref{cor:impossible-integers}
may be chosen independently of $p$.
\end{remark*}


\section{Non-periodic version}
\label{section:non-periodic}

Here we extend the results to $L^p(\R)$ spaces, $1 < p < 2$.
This can be deduced easily from the previous results, so we may be brief.
In particular we skip the formulation of the corresponding version of Theorem
\ref{thm:main-cyclic}, and restrict ourselves to

\begin{corollary}
\label{cor:impossible-reals}
Given any $1 < p < 2$ one can find two functions in $L^1(\R) \cap C_0(\R)$,
such that one is cyclic in $L^p(\R)$ and the other is not, but their
Fourier transforms have the same (compact) set of zeros.
\end{corollary}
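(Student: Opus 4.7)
The plan is to transplant the periodic counterexample to the real line by identifying a short arc of $\T$ with an interval of $\R$ and exploiting the local structure of the Wiener algebra. After rotating, I may assume the compact $K \subset \T$ of Theorem \ref{thm:main-cyclic} is contained in a short arc, say $K \subset (-1,1) \subset \T$, and identify it with the compact subset $\tilde K \subset (-1,1) \subset \R$. Since $A(\T)$ and $A(\R)$ have identical local structure, $\tilde K$ is a Helson set in $A(\R)$. Moreover, any distribution $S$ on $\T$ supported in $(-1,1)$ with $\ft S \in \ell^q(\Z)$ can be viewed as a compactly supported distribution on $\R$, whose Fourier transform on $\R$ is entire of exponential type $<1$ by Paley--Wiener; the Plancherel--Polya sampling theorem then gives $\|\ft S\|_{L^q(\R)} \asymp \|\{\ft S(n)\}\|_{\ell^q(\Z)}$. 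Hence $\tilde K$ supports a non-zero distribution with $\ft S \in L^q(\R)$, the real-line analog of \ref{cond:atag}.

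Next, the duality facts (iii)--(v) of Section \ref{subsection:background} admit immediate non-periodic versions via the convolution-algebra structure of $L^1(\R)$ and Herz's Lipschitz condition. I would rerun the Baire-category argument of Lemma \ref{lemma:helson-cyclic} inside the complete metric space $I(\tilde K) := \{g \in A(\R) \cap L^1(\R) : g|_{\tilde K}=0\}$, using the non-periodic Helson extension lemma and the $\R$-analog of Remark \ref{remark:finite-cyclic} (a function with finitely many zeros in its Fourier transform is cyclic in $L^p(\R)$). This produces a function $F_1 \in L^1(\R) \cap C_0(\R)$ whose Fourier transform vanishes on $\tilde K$, is non-vanishing off a prescribed compact set, and is cyclic in $L^p(\R)$.

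For the non-cyclic companion, build by a smooth-extension argument a function $F_2 \in L^1(\R) \cap C_0(\R)$ which decays fast enough that $\int|F_2(x)||x|^{\eps}\,dx < \infty$ for some $\eps > 0$, and whose Fourier transform has the same zero set $Z_{\ft{F_2}} = Z_{\ft{F_1}}$. Since $\tilde K \subset Z_{\ft{F_2}}$ carries a non-zero distribution with $\ft S \in L^q(\R)$ by the first paragraph, and $F_2$ is smooth enough, the $\R$-analog of Remark \ref{remark:smooth-distribution-noncyclic} forces $F_2$ to be non-cyclic in $L^p(\R)$. The pair $\{F_1, F_2\}$ is the required example.

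The main obstacle: in contrast to the circle, the Fourier zero set of a generic $L^1(\R)$-function is not compact, so the construction has to take place inside a subclass of $L^1(\R) \cap C_0(\R)$ whose members have Fourier transforms vanishing only on a prescribed compact set. This is arranged by fixing once and for all a smooth non-vanishing "background" near infinity and carrying out both the Baire argument (for $F_1$) and the smooth extension (for $F_2$) modulo it. Verifying that the periodic proofs carry over verbatim inside this restricted class is the one delicate bookkeeping step, but follows by the methods already developed in Sections \ref{section:helson}--\ref{section:proof-main}.
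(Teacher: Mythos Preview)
Your outline is essentially correct and would work, but it takes a noticeably heavier route than the paper's own argument. You propose to \emph{rebuild} the entire Helson-set/Baire-category machinery of Sections~\ref{section:helson}--\ref{section:proof-main} on $\R$: transfer $K$ to $\tilde K\subset\R$, verify it is Helson in $A(\hat\R)$, redo Lemma~\ref{lemma:helson-extension} and Lemma~\ref{lemma:helson-cyclic} non-periodically, and then manufacture the cyclic $F_1$ and the smooth non-cyclic $F_2$ with matching zero sets. The Plancherel--Polya step you cite is exactly the content of the equivalence $S\in A_q(\T)\iff S\in A_q(\hat\R)$ for distributions supported in an interval of length $<2\pi$, which the paper also invokes.

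The paper, by contrast, does \emph{not} redo any of the Baire or Helson arguments on $\R$. It simply takes the periodic cyclic function $h\in A(\T)$ already produced by Theorem~\ref{thm:main-cyclic}, rotates so that $h(\pi)>0$, and glues it to a Gaussian tail via a cutoff:
\[
g(t)=\chi(t)\,h(t)+(1-\chi(t))\,e^{-t^2},
\]
with $\chi\in\S(\hat\R)$ a bump equal to $1$ on an interval $I\supset K$ and supported in $(-\pi,\pi)$. This immediately places $g\in A(\hat\R)$ with compact zero set contained in $I$. Cyclicity of $g$ is then proved in one line by duality: a putative annihilator $S\in A_q(\hat\R)$ would be supported in $Z_g\subset I$, hence can be read as a distribution on $\T$ in $A_q(\T)$ annihilating $h\cdot\phi$ for all $\phi$, contradicting the periodic cyclicity of $h$. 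The non-cyclic partner is any $f\in\S(\hat\R)$ with $Z_f=Z_g$; the same $A_q(\T)\leftrightarrow A_q(\hat\R)$ transfer produces the annihilating distribution.

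What each approach buys: yours is more self-contained on $\R$ and would yield a direct non-periodic analogue of Theorem~\ref{thm:main-helson}, at the cost of repeating several pages of argument and handling the ``background near infinity'' bookkeeping you flag (which is genuinely fiddly inside a Baire argument, since the relevant set is affine rather than linear). The paper's approach is shorter and cleaner: it leverages the periodic result as a black box and reduces the transfer to the single well-known fact about $A_q$-equivalence of compactly supported distributions, avoiding any repetition of the Helson or Baire machinery.
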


Here $C_0(\R)$ is the space of continuous functions on $\R$ vanishing
at infinity.

It will be convenient to denote by $\hat{\R}$ another copy of the real
line. We consider distributions on the Schwartz space $S(\hat{\R})$.
We denote by $A_p(\hat{\R})$, $1 \leq p < \infty$, the space of Fourier
transforms of functions in $L^p(\R)$, with the corresponding norm.
In particular, for $p = 1$ this is the Wiener algebra $A(\hat{\R})$
of functions with an absolutely convergent Fourier integral.

Recall that, by definition, a function $F(x) \in L^p(\R)$ is a cyclic
vector if the translates $\{F(x - y)\}$, $y \in \R$, span the whole space.
Equivalently, $F$ is cyclic if the set $\{f(t) \, \phi(t)\}$, where
$f = \ft{F}$ and $\phi$ runs over $S(\hat{\R})$, is dense in $A_p(\hat{\R})$.

\begin{proof}[Proof of Corollary \ref{cor:impossible-reals}]
Fix $1<p<2$, and take the compact $K$ of Theo\-rem~\ref{thm:main-cyclic}.
By the property \ref{cond:b} there is $h \in A(\T)$, vanishing on $K$,
which is cyclic in $A_p(\T)$ (with respect to multiplication by
trigonometric polynomials).

We may assume that $h(t)$ is positive at some point $t$, and by rotation,
that $h(\pi) > 0$. It follows that there is an interval
\[
I := (-\pi + \delta, \; \, \pi - \delta), \quad \delta > 0,
\]
such that $K \subset \{t : \Re h(t) \leq 0\} \subset I$. Choose a
function $\chi \in S(\hat{\R})$, $0 \leq \chi \leq 1$, compactly
supported by $(-\pi, \pi)$, and such that $\chi(t) = 1$ on $I$. Define
\[
g(t) := \chi(t) \, h(t) + (1 - \chi(t)) \, e^{-t^2}, \quad t \in \hat{\R}.
\]
It is easy to see that:
\begin{enumerate-math}
\item
The zero set $\zg$ is compact, $K \subset \zg \subset I$.
\item
$g = \ft{G}$ for some $G \in L^1(\R) \cap C_0(\R)$,
which implies $g \in A_p(\hat{\R})$.
\end{enumerate-math}

\subsubsection*{Claim 1}
The set $\{g(t) \, \phi(t)\}$,
$\phi \in S(\hat{\R})$, is dense in $A_p(\hat{\R})$.

If not then, by duality, there is a (non-zero) distribution
$S \in A_q(\hat{\R})$, $q = p/(p-1)$, such that 
$\dotprod{S}{\; g \cdot \phi} = 0$ for every $\phi \in S(\hat{\R})$.
It follows that
\begin{equation}
\label{eq:support-interval}
\supp(S) \subset \zg \subset I, \quad |I| < 2 \pi.
\end{equation}
We have $g(t) = h(t)$ on $I$, since $\chi(t) = 1$ on $I$. Hence
\begin{equation}
\label{eq:annihilate}
\dotprod{S}{\; h \cdot \phi} = 0
\quad \text{for every $\phi \in S(\hat{\R})$.}
\end{equation}
The condition \eqref{eq:support-interval} allows us to regard $S$ also
as a distribution on $\T$. It is well known that under this condition
the following equivalence holds:
\[
S \in A_q(\T) \; \iff \; S \in A_q(\hat{\R}).
\]
But $h$ is cyclic in $A_p(\T)$, so \eqref{eq:annihilate} implies
that $S = 0$, which proves the claim.

Now take an arbitrary function $f \in S(\hat{\R})$ with $\zf = \zg$.

\subsubsection*{Claim 2}
The set $\{f(t) \, \phi(t)\}$,
$\phi \in S(\hat{\R})$, is not dense in $A_p(\hat{\R})$.

Indeed, the property \ref{cond:a} from Theorem \ref{thm:main-cyclic}
implies that $K$ supports a (non-zero) distribution $S \in A_q(\T)$.
As above we can regard it as a distribution on $\hat{\R}$, belonging
to $A_q(\hat{\R})$. But $f$ is a \emph{smooth} function in $A_p(\hat{\R})$,
and $f|_K=0$, hence $\dotprod{S}{\; f \cdot \phi} = 0$ for every
$\phi \in S(\hat{\R})$.

This means that the inverse Fourier transform of $f$ is a function
$F \in L^1(\R) \cap C_0(\R)$, which is non-cyclic in $L^p(\R)$.
Our corollary is thus proved.
\end{proof}


\section{Remarks}
\label{section:remarks}

\subsection{}
Theorem \ref{thm:main-cyclic} may be put into the context of the
theory of translation-invariant subspaces. A linear subspace
$M \subset L^p(G)$ is called translation-invariant if whenever $f$
belongs to $M$, then so do all of the translates of $f$. Observe that
$f \in L^p(G)$ is a cyclic vector if and only if it does not belong
to any proper closed translation-invariant subspace of $L^p(G)$.

It is well known that any closed translation-invariant subspace in 
$\ell^2(\Z)$ can be uniquely recovered from the set of the common zeros
of the Fourier transforms of its elements.

This is not the case in $\ell^1(\Z)$. Malliavin's ``non-synthesis'' example
\cite{malliavin} means that different closed translation-invariant subspaces
in this space may have the same set of common zeros. More precisely,
for a compact set $K \subset \T$ consider the invariant subspaces
\[
I(K) = \{ \c \in \ell^1(\Z) \, : \, \text{$\ft{\c}$ vanishes on $K$}\},
\]
\[
J(K) = \{ \c \in \ell^1(\Z) \, : \, \text{$\ft{\c}$ vanishes on some open
set containing $K$}\}.
\]
Malliavin proved that there is a compact $K$ such that the closure
of $J(K)$ is strictly smaller than $I(K)$.

Kahane \cite{kahane:synthese} (see also \cite{kahane-salem}, p. 121)
showed that such a result still holds if one takes the closures of $J(K)$
and $I(K)$ in $\ell^p(\Z)$, $1 < p < 2$.

Theorem \ref{thm:main-cyclic} reveals a sharper phenomenon in these
spaces, which is not possible in $\ell^1(\Z)$. Namely, there is a compact
$K$ such that the $\ell^p$-closures satisfy
\[
\clos J(K) \subsetneqq \clos I(K) = \ell^p(\Z).
\]

\subsection{}
Strictly speaking, it is not necessary to require that $\c \in \ell^1$
in order to have the zero set $\zftc$ well-defined. The continuity of
$\ft{\c}$ is sufficient for that, as appeared in the weaker version of
Theorem \ref{thm:main-cyclic} proved in \cite{lev-olevskii:generators}.
However, the advantage of the present version seems to be substantial,
since very little is known on the relation between cyclicity in
$\ell^p$ $(1<p<2)$ and the zero set unless $\c \in \ell^1$.
In particular we do not know the answer to the following question:
let $f \in C(\T) \cap A_p(\T)$ have no zeros, does this imply that
$\c = \ft{f}$ is a cyclic vector?

\subsection{}
Let $K$ be a Helson set. Define its `Helson constant' as the maximal
possible $\delta_1(K)$ in \eqref{eq:helson-condition-sup}. K\"{o}rner's
\cite{korner} and Kaufman's \cite{kaufman} constructions (see Section
\ref{subsection:helson-intro} above) give a Helson constant $1$. What can
be said about the Helson constant of $K$ in Theorem \ref{thm:main-helson}\,?
Specifically, must it tend to zero when $q \to 2$\,?

\subsection{}
Perhaps the most interesting problem left open is:
could one characterize in reasonable terms the cyclic vectors $\c$
in $\ell^p$, $1<p<2$, under the standard assumption $\c \in \ell^1$
with no extra restrictions?


\end{document}